\documentclass{amsart}
\linespread{1.05}
\usepackage[toc,page]{appendix}
\usepackage{etex}
\usepackage[utf8]{inputenc}
\usepackage[T1]{fontenc}
\usepackage{lmodern}
\usepackage{amscd,amssymb,microtype,mathtools,hyperref}
\usepackage[all,cmtip]{xy}
\usepackage{tikz-cd}
\newtheorem{thm}{Theorem}[section]
\newtheorem{theorem}[thm]{Theorem}
\newtheorem{lem}[thm]{Lemma}
\newtheorem{lemma}[thm]{Lemma}

\newtheorem{prop}[thm]{Proposition}
\newtheorem{proposition}[thm]{Proposition}

\theoremstyle{remark}

\newtheorem{situation}[thm]{Situation}
\newtheorem{remark}[thm]{Remark}

\theoremstyle{definition}

\newtheorem{definition}[thm]{Definition}

\DeclareMathOperator{\chari}{char}

\numberwithin{equation}{thm}

\allowdisplaybreaks[4]

\begin{document}

\vfuzz0.5pc
\hfuzz0.5pc 

\newcommand{\claimref}[1]{Claim~\ref{#1}}
\newcommand{\thmref}[1]{Theorem~\ref{#1}}
\newcommand{\propref}[1]{Proposition~\ref{#1}}
\newcommand{\lemref}[1]{Lemma~\ref{#1}}
\newcommand{\coref}[1]{Corollary~\ref{#1}}
\newcommand{\remref}[1]{Remark~\ref{#1}}
\newcommand{\conjref}[1]{Conjecture~\ref{#1}}
\newcommand{\questionref}[1]{Question~\ref{#1}}
\newcommand{\defnref}[1]{Definition~\ref{#1}}
\newcommand{\secref}[1]{\S\ref{#1}}
\newcommand{\ssecref}[1]{\ref{#1}}
\newcommand{\sssecref}[1]{\ref{#1}}

\newcommand{\RED}{{\mathrm{red}}}
\newcommand{\tors}{{\mathrm{tors}}}
\newcommand{\eq}{\Leftrightarrow}

\newcommand{\mapright}[1]{\smash{\mathop{\longrightarrow}\limits^{#1}}}
\newcommand{\mapleft}[1]{\smash{\mathop{\longleftarrow}\limits^{#1}}}
\newcommand{\mapdown}[1]{\Big\downarrow\rlap{$\vcenter{\hbox{$\scriptstyle#1$}}$}}
\newcommand{\smapdown}[1]{\downarrow\rlap{$\vcenter{\hbox{$\scriptstyle#1$}}$}}

\newcommand{\I}{\mathcal{I}}
\newcommand{\J}{\mathcal{J}}
\newcommand{\CO}{\mathcal{O}}
\newcommand{\C}{\mathcal{C}}
\newcommand{\BC}{\mathbb{C}}
\newcommand{\BQ}{\mathbb{Q}}
\newcommand{\m}{\mathcal{M}}
\newcommand{\h}{\mathcal{H}}
\newcommand{\Z}{\mathcal{Z}}
\newcommand{\BZ}{\mathbb{Z}}
\newcommand{\W}{\mathcal{W}}
\newcommand{\Y}{\mathcal{Y}}
\newcommand{\T}{\mathcal{T}}
\newcommand{\BP}{\mathbb{P}}
\newcommand{\CP}{\mathcal{P}}
\newcommand{\G}{\mathbb{G}}
\newcommand{\BR}{\mathbb{R}}
\newcommand{\D}{\mathcal{D}}
\newcommand{\DD}{\mathcal{D}}
\newcommand{\LL}{\mathcal{L}}
\newcommand{\f}{\mathcal{F}}
\newcommand{\E}{\mathcal{E}}
\newcommand{\BN}{\mathbb{N}}
\newcommand{\N}{\mathcal{N}}
\newcommand{\K}{\mathcal{K}}
\newcommand{\R} {\mathbb{R}}
\newcommand{\PP}{\mathbb{P}}
\newcommand{\Pp}{\mathbb{P}}
\newcommand{\BF}{\mathbb{F}}
\newcommand{\QQ}{\mathcal{Q}}
\newcommand{\closure}[1]{\overline{#1}}
\newcommand{\EQ}{\Leftrightarrow}
\newcommand{\imply}{\Rightarrow}
\newcommand{\isom}{\cong}
\newcommand{\embed}{\hookrightarrow}
\newcommand{\tensor}{\mathop{\otimes}}
\newcommand{\wt}[1]{{\widetilde{#1}}}
\newcommand{\ol}{\overline}
\newcommand{\ul}{\underline}
\newcommand{\et}{\textup{\'et}}

\newcommand{\bs}{{\backslash}}
\newcommand{\CS}{\mathcal{S}}
\newcommand{\CA}{\mathcal{A}}
\newcommand{\Q}{\mathbb{Q}}
\newcommand{\F}{\mathcal{F}}
\newcommand{\sing}{{\text{sing}}}
\newcommand{\B}{\mathcal{B}}
\newcommand{\X}{\mathcal{X}}
\newcommand{\EE}{\mathbb{E}}

\newcommand{\ECS}[1]{E_{#1} (X)}
\newcommand{\CV}[2]{{\mathcal{C}}_{#1,#2} ({X})}
\newcommand{\LC}{\mathbb{L}}
\newcommand{\OO}{\mathcal{O}}
\newcommand{\rank}{\mathop{\mathrm{rank}}\nolimits}
\newcommand{\codim}{\mathop{\mathrm{codim}}\nolimits}
\newcommand{\Ord}{\mathop{\mathrm{Ord}}\nolimits}
\newcommand{\Var}{\mathop{\mathrm{Var}}\nolimits}
\newcommand{\Ext}{\mathop{\mathrm{Ext}}\nolimits}
\newcommand{\EXT}{\mathop{\mathcal{E}\mathrm{xt}}\nolimits}
\newcommand{\Pic}{\mathop{\mathrm{Pic}}\nolimits}
\newcommand{\Spec}{\mathop{\mathrm{Spec}}\nolimits}
\newcommand{\Jac}{\mathop{\mathrm{Jac}}\nolimits}
\newcommand{\Div}{\mathop{\mathrm{Div}}\nolimits}
\newcommand{\sgn}{\mathop{\mathrm{sgn}}\nolimits}
\newcommand{\supp}{\mathop{\mathrm{supp}}\nolimits}
\newcommand{\Hom}{\mathop{\mathrm{Hom}}\nolimits}
\newcommand{\Sym}{\mathop{\mathrm{Sym}}\nolimits}
\newcommand{\nilrad}{\mathop{\mathrm{nilrad}}\nolimits}
\newcommand{\Ann}{\mathop{\mathrm{Ann}}\nolimits}
\newcommand{\Proj}{\mathop{\mathrm{Proj}}\nolimits}
\newcommand{\mult}{\mathop{\mathrm{mult}}\nolimits}
\newcommand{\Bs}{\mathop{\mathrm{Bs}}\nolimits}
\newcommand{\Span}{\mathop{\mathrm{Span}}\nolimits}
\newcommand{\IM}{\mathop{\mathrm{Im}}\nolimits}
\newcommand{\Hol}{\mathop{\mathrm{Hol}}\nolimits}
\newcommand{\End}{\mathop{\mathrm{End}}\nolimits}
\newcommand{\CH}{\mathop{\mathrm{CH}}\nolimits}
\newcommand{\Exec}{\mathop{\mathrm{Exec}}\nolimits}
\newcommand{\SPAN}{\mathop{\mathrm{span}}\nolimits}
\newcommand{\birat}{\mathop{\mathrm{birat}}\nolimits}
\newcommand{\cl}{\mathop{\mathrm{cl}}\nolimits}
\newcommand{\rat}{\mathop{\mathrm{rat}}\nolimits}
\newcommand{\Bir}{\mathop{\mathrm{Bir}}\nolimits}
\newcommand{\Rat}{\mathop{\mathrm{Rat}}\nolimits}
\newcommand{\aut}{\mathop{\mathrm{aut}}\nolimits}
\newcommand{\eff}{\mathop{\mathrm{eff}}\nolimits}
\newcommand{\nef}{\mathop{\mathrm{nef}}\nolimits}
\newcommand{\amp}{\mathop{\mathrm{amp}}\nolimits}
\newcommand{\DIV}{\mathop{\mathrm{Div}}\nolimits}
\newcommand{\Bl}{\mathop{\mathrm{Bl}}\nolimits}
\newcommand{\Cox}{\mathop{\mathrm{Cox}}\nolimits}
\newcommand{\NE}{\mathop{\mathrm{NE}}\nolimits}
\newcommand{\NM}{\mathop{\mathrm{NM}}\nolimits}
\newcommand{\Gal}{\mathop{\mathrm{Gal}}\nolimits}
\newcommand{\coker}{\mathop{\mathrm{coker}}\nolimits}
\newcommand{\ch}{\mathop{\mathrm{ch}}\nolimits}
\newcommand{\Char}{\mathop{\mathrm{char}}\nolimits}

\setcounter{tocdepth}{1}
\title[Automorphism and Cohomology: Complete Intersections]{Automorphism and
  Cohomology II:\ Complete intersections}

\author[X.~Chen]{Xi Chen${}^{\dagger}$}
\address{632 Central Academic Building\\
University of Alberta\\
Edmonton, Alberta T6G 2G1, CANADA}
\email{xichen@math.ualberta.ca}

\author[X.~Pan]{Xuanyu Pan} \address{Institute of Mathematics, AMSS, Chinese
  Academy of Sciences, 55 ZhongGuanCun East Road, Beijing 100190, China} \email{pan@amss.ac.cn}

\author[D.~Zhang]{Dingxin Zhang}
\address{YMSC, Tsinghua University \\ 30 Shuangqing Rd, Beijing 100190, China}
\email{zhangdingxin13@gmail.com}

\date{October 6, 2019}
\thanks{${}^{\dagger}$ Research partially supported by NSERC 262265.}
\keywords{Automorphisms, Complete Intersections, Deformation Theory}
\subjclass{Primary 14J50; Secondary 14F20}
\begin{abstract}
We prove that the automorphism group of a general complete intersection $X$ in $\BC\PP^n$ is trivial with a few well-understood
exceptions. We also prove that the automorphism group of a complete intersection
$X$ acts on the cohomology of $X$ faithfully with a few well-understood
exceptions.
\end{abstract}

\maketitle
\tableofcontents

\section{Introduction}

In this paper we prove two results concerning automorphisms of complete
intersections in projective spaces: one on their ``generic triviality'' and the other on the faithfulness of their actions on cohomology groups.

The earliest result in the first direction, to the best of our knowledge, is due to H. Matsumura and P. Monsky~\cite{M-M}:

\begin{thm}[Matsumura and Monsky]\label{THMM-M}
Let $X$ be a smooth hypersurface in $\PP^n$ of degree $d$ over a field $k$. Then
\begin{itemize}
\item $\mathrm{Aut}(X)$ is finite if $n\ge 3$, $d\ge 3$ and $(n,d) \ne (3,4)$;
\item $\mathrm{Aut}(X) = \{1\}$ if $X$ is generic, $n\ge 3$ and $d\ge 3$ except the case
that $(n,d) = (3,4)$ and $\chari(k) > 0$.
\end{itemize}
\end{thm}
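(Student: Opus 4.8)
The plan is to prove the two assertions in turn; the common engine is that automorphisms of a hypersurface are essentially linear and that such an $X$ carries no infinitesimal automorphisms. \textbf{Finiteness.} First I would show that $\operatorname{Aut}(X)=\operatorname{Stab}_{\mathrm{PGL}_{n+1}}(X)$: if $d\neq n+1$ then $\omega_X=\OO_X(d-n-1)$ is a nonzero integer multiple of $\OO_X(1)$ in the torsion-free group $\Pic(X)$, so any automorphism, fixing $\omega_X$, also fixes $\OO_X(1)$ and hence comes from $\mathrm{PGL}_{n+1}$; if $d=n+1$ and $n\geq 4$ then $\Pic(X)=\BZ\,\OO_X(1)$ by Grothendieck--Lefschetz and the same holds; the leftover case $d=n+1$, $n=3$ is the excluded quartic surface. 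Then I would show $H^0(X,T_X)=0$: tensoring the Euler sequence by $\OO_{\PP^n}(-d)$ and using $H^1(\PP^n,\OO(1-d))=H^2(\PP^n,\OO(-d))=0$ (the vanishing of the $H^2$ is exactly where $n\geq 3$ enters, and it fails on $\PP^2$), one gets $H^0(X,T_{\PP^n}|_X)\cong H^0(\PP^n,T_{\PP^n})$, so that $H^0(X,T_X)$ is the space of $v\in\mathfrak{pgl}_{n+1}$ with $v(F)\in k\cdot F$, $F$ the defining equation; subtracting a multiple of the Euler field (legitimate when $\chari k\nmid d$) turns this into a syzygy $\sum_i\ell_i\,\partial_iF=0$ with $\ell_i$ linear, which must vanish because for smooth $X$ the $\partial_iF$ form a regular sequence, whose syzygies have entries of degree $\geq d-1\geq 2$. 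Finally $\operatorname{Stab}_{\mathrm{PGL}_{n+1}}(X)$ is a finite-type group scheme with Lie algebra $H^0(X,T_X)=0$, hence finite. (When $\chari k\mid d$ the Euler step degrades; one instead argues that $\operatorname{Aut}(X)^0_{\mathrm{red}}$, a smooth connected subgroup of $\mathrm{PGL}_{n+1}$ acting faithfully on $X$, is trivial, via del Pezzo theory for the cubic surface and finiteness of the automorphism group of a variety of general type when $d\geq n+2$.)

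\textbf{Generic triviality.} By the previous step it suffices to show that the non-free locus $B=\{[F]\in\mathcal H:\operatorname{Stab}_{\mathrm{PGL}_{n+1}}([F])\neq 1\}$ is a proper closed subset of $\mathcal H=\PP(H^0(\PP^n,\OO(d)))$, together with the fact that the general $X$ has $\operatorname{Aut}(X)\subseteq\mathrm{PGL}_{n+1}$; the latter is automatic by the first step unless $(n,d)=(3,4)$, where it amounts to the general quartic surface having $\Pic=\BZ\,\OO_X(1)$ (a Picard-rank-one K3 fixes its ample generator) — true over $\mathbb C$ by Noether--Lefschetz, and it is precisely the failure of this input in positive characteristic that forces the exclusion of $(3,4)$ there. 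To bound $B$: by Jordan decomposition reduce to semisimple $g$ (a unipotent $g\neq 1$ fixing $X$ generates a $\G_a$ acting on $X$, contradicting $H^0(X,T_X)=0$, at least in characteristic zero); diagonalize $g$ with eigenvalue multiplicities $m_1,\dots,m_r$, $\sum m_s=n+1$, $r\geq 2$, so that the conjugacy class of $[g]$ in $\mathrm{PGL}_{n+1}$ has dimension $2\sum_{s<t}m_sm_t$ and $\dim(\mathrm{PGL}_{n+1}\cdot\mathcal H^{g})\leq\dim\mathcal H^{g}+2\sum_{s<t}m_sm_t$, where $\dim\mathcal H^{g}+1$ is the largest number, over scalars $\chi$, of degree-$d$ monomials $x^a$ with $\prod_s\mu_s^{\sum_{i\in I_s}a_i}=\chi$. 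Grouping these monomials by multidegree $(d_1,\dots,d_r)$ and comparing with $\dim\mathcal H=\binom{n+d}{d}-1$, one checks the inequality is strict for every $n\geq 3$, $d\geq 3$ and all admissible $(m_s),(\mu_s)$; it degenerates to an equality exactly for $(n,d)=(2,3)$ (an involution fixing the general plane cubic), which is why $n\geq 3$ is assumed. A variant replaces the global count by deforming the Fermat hypersurface $\{\sum x_i^d=0\}$ in a generic direction and using properness of $\{(t,g):g\cdot F_t\in kF_t\}\subset\mathbb A^1\times\mathrm{PGL}_{n+1}$ to reduce to the observation that, for $g\neq 1$ diagonal, the $g$-eigenspace of $H^0(\OO(d))$ through $\sum x_i^d$ is a proper subspace.

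\textbf{The main obstacle.} The crux is the uniform dimension estimate: the monomial count must be run over all partitions $(m_s)$ of $n+1$ and all eigenvalue patterns $(\mu_s)$ at once, with the bookkeeping that specializing the $\mu_s$ to roots of unity enlarges $\mathcal H^{g}$ but confines $g$ to a lower-dimensional family, so that the near-extremal configurations — involutions and order-$d$ "Fermat" symmetries — must each be verified to be strictly subextremal once $n\geq 3$, $d\geq 3$. In positive characteristic this is compounded by unipotent elements of order $p$, which need not generate a $\G_a$ (so $H^0(X,T_X)=0$ no longer disposes of them), and by the degeneration of $p$-th roots of unity; reconciling the quartic-surface case with these phenomena, absent the Torelli/Noether--Lefschetz input available over $\mathbb C$, is what obliges one to exclude $(n,d)=(3,4)$ when $\chari k>0$.
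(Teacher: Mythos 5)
The paper does not actually prove Theorem~\ref{THMM-M}: it is quoted from Matsumura--Monsky \cite{M-M} as background, so there is no internal proof to measure you against. Your outline follows the classical route, and your generic-triviality step is in substance the $c=1$ specialization of the paper's own Section~\ref{sec:equiv-koda-spenc} argument for complete intersections: the quantity $2\sum_{s<t}m_sm_t=(n+1)^2-\sum_s m_s^2$ bounding the orbit of $\mathcal{H}^g$ is exactly the right-hand side of \lemref{LEMCIAUTO100}, and the strict inequality you dispatch with ``one checks'' is precisely \eqref{ECIAUTO031} in \lemref{LEMCIAUTO101}, whose proof (the weight decomposition \eqref{ECIAUTO104} and the chain of estimates \eqref{ECIAUTO114}--\eqref{ECIAUTO112}) is the hard quantitative core of the whole theorem. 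The near-extremal configurations you single out --- involutions with multiplicities $(n,1)$ and order-$d$ ``Fermat'' symmetries --- are exactly where that bookkeeping is needed, so asserting the inequality is not a proof of it.

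Two places where the proposal would genuinely fail as written. First, finiteness when $\chari(k)\mid d$: your fallback covers general type ($d\ge n+2$) and the cubic surface, but says nothing about Fano or Calabi--Yau hypersurfaces of dimension $\ge 3$ with $p\mid d$ (cubic threefolds in characteristic $3$, quintic threefolds in characteristic $5$, \dots); there $H^0(X,T_X)=0$ still holds but the Euler-field/regular-sequence trick is unavailable (indeed $\sum z_i\,\partial_iF=0$ is then itself a linear relation among the partials), and one needs the separate computation carried out by Matsumura--Monsky, or Benoist's Koszul-complex argument \cite{MR3022710}. Second, generic triviality in positive characteristic is part of the statement (only $(n,d)=(3,4)$ is excluded), and there your reduction to semisimple elements breaks down: a nontrivial stabilizer of the generic $X$ could a priori be generated by an element of order $p=\chari(k)$, which is unipotent, admits no eigenspace decomposition, and need not generate a $\G_a$, so neither the dimension count nor the $H^0(T_X)=0$ argument disposes of it. You flag this but offer no resolution; note that the paper itself does not push the eigenvalue count into characteristic $p$ --- it proves Theorem~\ref{THMCIAUTO000}(\ref{THMCIAUTO000:3}) by the entirely different degeneration and indeterminacy-resolution argument of Sections~\ref{SECCIAUTORES} and~\ref{nodes}. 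As it stands, then, your proposal yields (modulo the unverified inequality) only the characteristic-zero half of the statement.
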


Naturally, one may ask whether this result generalizes to complete intersections.
Let us first make the following definition.

\begin{definition}
Let $k$ be a field and $X$ be a closed subscheme of $\mathbb{P}^n_k$. The
linear automorphism group of $X$ with respect to the projective embedding $X
\subset \mathbb{P}^n_k$, denoted by $\mathrm{Aut}_L(X)$, is the subgroup of
$\mathbf{PGL}_{n+1}(k)$ whose linear action on $\mathbb{P}^n_k$ takes $X$ onto
$X$ itself.
\end{definition}

The first part of Theorem~\ref{THMM-M} was generalized to complete intersections by
O.~Benoist~\cite{MR3022710}. He proved that
$\mathrm{Aut}_L(X)$ is finite for a smooth complete intersection
$X\subset \mathbb{P}^n$ with some well-known exceptions. In this paper we study
the automorphism group of a generic complete intersection.

\begin{thm}\label{THMCIAUTO000}
Let $X$ be a smooth complete intersection in $\mathbb{P}_k^n$ of type
$(d_1,d_2,\ldots,d_c)$ over an algebraically closed field $k$, i.e., $X = X_1\cap X_2\cap \ldots\cap X_c$ with
$\deg X_i =d_i$. If $\dim X \ge 2$, $\deg X \ge 3$ and
$2\le d_1\le d_2\le \ldots\le d_c$, then
\begin{enumerate}
\item\label{THMCIAUTO000:1}
$\mathrm{Aut}_L(X) = \{1\}$ for $\chari(k) = 0$
and $|\mathrm{Aut}_L(X)| = (\chari(k))^r$ for $\chari(k)>0$ and some $r\in \BN$ when $X$ is a general complete intersection of type $(d_1,d_2,\ldots,d_c)\ne (2,2)$, and
\item\label{THMCIAUTO000:2}
$\mathrm{Aut}_L(X) = (\BZ/2\BZ){}^{n}$ for $X$ general of type $(2,2)$ and $\chari(k)\ne 2$.
\end{enumerate}
\end{thm}

\begin{remark}
Retain the hypotheses and notation of Theorem~\ref{THMCIAUTO000}
and suppose that $X$ is one of the following smooth complete intersections in $\BC\PP^n$:
\begin{itemize}
\item $\dim X \ge 3$, or
\item $\dim X \ge 2$ and $\omega_X\ne \CO_X$, or
\item $\dim X = 2$, $\omega_X\cong \mathcal{O}_X$ and $\rank_\BZ \Pic(X) = 1$, i.e., $X$ is a K3 surface of Picard rank $1$.
\end{itemize}
Then it is known that $\mathrm{Aut}(X) = \mathrm{Aut}_L(X)$. Thus, by
Theorem~\ref{THMCIAUTO000}, when $X$ is general in its moduli, the
automorphism group of $X$ is trivial.

The triviality of $\mathrm{Aut}_L(X)$ for a general complete intersection $X\subset \BC\mathbb{P}^n$, to the best of our knowledge, was only known in some special cases:
\begin{itemize}
\item
aforementioned hypersurface case~\cite{M-M};
\item
$3 \le d_1 < d_2 \le ... \le d_c$~\cite[Lemma 2.12]{jnl};
\item
$(d_1,d_2,...,d_c) = (2,2,2)$~\cite[Lemma 2.13]{jnl}.
\end{itemize}
For a very general complete intersection $X$ of Calabi-Yau or general type in $\BC\PP^n$,
we have a stronger statement: there are no non-trivial dominant rational self maps
$\sigma: X\dashrightarrow X$~\cite{Che13}.
\end{remark}

Our second result, which is derived from Theorem~\ref{THMCIAUTO000} above and
the method in the papers~\cite{Pan} and~\cite{PANL} of the second author, is
about the faithfulness of the action of \(\mathrm{Aut}(X)\) on its étale
cohomology group. Questions about this faithfulness was explored for varieties
of low dimension, and Burns, Rapoport, Shafarevich, Ogus \ldots confirm this
question for K3 surfaces. Few high-dimensional varieties are known to have a
positive answer to this question.

\begin{thm}\label{autocoh}
Let $k$ be an algebraically closed field and $X$ be a smooth complete
intersection in $\mathbb{P}^n_{k}$ of type $(d_1,d_2,\ldots,d_c)$. Suppose that $\deg X \ge 3$ and
$2\le d_1\le d_2\le \ldots\le d_c$. Let $\ell$ be a prime different from
$\mathrm{char}(k)$.

\begin{enumerate}
\item If $(d_1,d_2,\ldots,d_c)=(2,2)$, $\chari(k)\neq 2$ and $\dim(X)=m(>1)$ is odd
(resp.~even), then the kernel of
\begin{equation*}
\mathrm{Aut}(X)\rightarrow \mathrm{Aut}(\mathrm{H}_{\et}^m(X,\mathbb{Q}_\ell))
\end{equation*}
is $(\mathbb{Z}/2\mathbb{Z}){}^{m+1}$ (resp.~trivial) for X general.

\item\label{item:1} Suppose that $X$ is not an elliptic curve and is not a
  quadric hypersurface. If $(d_1,d_2,\ldots,d_c)\ne (2,2)$,
then the map
\begin{equation}
\label{eq:inj}
\mathrm{Aut}(X)\rightarrow \mathrm{Aut}(\mathrm{H}_{\et}^m(X,\mathbb{Q}_\ell))
\end{equation}
is injective.
\end{enumerate}
\end{thm}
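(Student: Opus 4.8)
The plan is to convert both statements into questions about Euler characteristics of fixed loci of linear automorphisms, via the Lefschetz fixed point formula.

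\emph{Reduction and reformulation.} Let $X\subset\PP^n_k$ be a smooth complete intersection with $m=\dim X$. By the weak Lefschetz theorem $H^i_\et(X,\BQ_\ell)$ is spanned by a power of $h=c_1(\CO_X(1))$ when $i\ne m$ (and vanishes for odd $i\ne m$), and $h$ is fixed by every automorphism --- automatically if $m\ge3$, since then $\Pic(X)=\BZ h$, and if $m=2$ because $\mathrm{Aut}(X)=\mathrm{Aut}_L(X)$ unless $X$ is a $K3$ surface, a case in which part~\ref{item:1} is the global Torelli theorem. Hence $\sigma\in\mathrm{Aut}(X)$ acts trivially on $H^m_\et(X,\BQ_\ell)$ if and only if it acts trivially on all of $H^\bullet_\et(X,\BQ_\ell)$; excluding the $K3$ case we may assume $\mathrm{Aut}(X)=\mathrm{Aut}_L(X)\subset\mathbf{PGL}_{n+1}(k)$, which is finite by Benoist's theorem \cite{MR3022710} since type $(2,2)$ is excluded in part~\ref{item:1}. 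Fix $\sigma\in\mathrm{Aut}_L(X)$ of finite order prime to $\chari(k)$ and diagonalise a lift; if $\PP(W_1),\dots,\PP(W_s)$ are the components of $\mathrm{Fix}_\sigma(\PP^n)$ (so $s\ge2$ when $\sigma\ne1$), then $\mathrm{Fix}_\sigma(X)=\bigsqcup_j\bigl(X\cap\PP(W_j)\bigr)$, a disjoint union of smooth complete intersections cut out inside $\PP(W_j)$ by those defining forms not vanishing on $W_j$ (so $X\cap\PP(W_j)=\PP(W_j)$ if all of them do); using smoothness of $X$ each piece has the expected dimension, hence an Euler characteristic depending only on its combinatorial data. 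Because $\sigma$ rescales each $f_i$ by a fixed scalar, the eigenvalue of $\sigma$ on a $W_j$ with $f_i|_{W_j}\ne0$ is a $d_i$-th root of that scalar; thus at most $d_i$ eigenspaces carry a non-vanishing $f_i$, and $\PP(W_j)\subset X$ for the rest, bounding their dimension. Since $\mathrm{Fix}_\sigma(X)$ is smooth the Lefschetz fixed point formula and the above give $\chi(\mathrm{Fix}_\sigma(X))=\chi(X)+(-1)^m\bigl(\operatorname{tr}(\sigma\mid H^m_\et(X,\BQ_\ell))-b_m\bigr)$, and as $b_m$ roots of unity sum to $b_m$ only when all are $1$, we conclude:
\[
\sigma\ \text{acts trivially on}\ H^m_\et(X,\BQ_\ell)\quad\Longleftrightarrow\quad\chi(\mathrm{Fix}_\sigma(X))=\chi(X).
\]

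\emph{Part~\ref{item:1}.} It is enough to show $\chi(\mathrm{Fix}_\sigma(X))\ne\chi(X)$ for every nontrivial $\sigma$ of prime order (the general case follows by passing to a power). For order prime to $\chari(k)$ one uses the explicit formula for the Euler characteristic of a smooth complete intersection of type $\vec d$ in a projective space, obtained from $c(T)=(1+h)^{\dim+1}/\prod_i(1+d_ih)$, and shows by the combinatorial constraints above that the identity $\chi(X)=\sum_j\chi(X\cap\PP(W_j))$ --- a sum over $s\ge2$ strictly smaller complete intersections --- forces $\vec d=(2,2)$, given $\deg X\ge3$ and $\dim X\ge2$; the case $\dim X=1$ (genus $\ge2$) is classical, following from the Lefschetz formula since then $\chi(X)=2-2g<0$, the elliptic curve being excluded by hypothesis. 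The finitely many types of \thmref{THMCIAUTO000}(3) for which a general complete intersection already has a nontrivial linear automorphism are handled individually by feeding the known $\mathrm{Aut}_L(X)$ into the same formula. Finally, if $\chari(k)=p$ divides the order of $\sigma$, the $p$-part of $\sigma$ lifts to a unipotent matrix with a single projectivised eigenspace, and one concludes either via the Lefschetz--Verdier trace formula or by lifting the pair $(X,\sigma)$ to characteristic $0$.

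\emph{Part~(1).} Here $\chari(k)\ne2$, $X=X_{2,2}\subset\PP^n$ with $n=m+2$, and $\mathrm{Aut}(X)=\mathrm{Aut}_L(X)=(\BZ/2\BZ)^n$ by \thmref{THMCIAUTO000}(2). Simultaneously diagonalising, $Q_1=\sum x_i^2$ and $Q_2=\sum a_ix_i^2$ with the $a_i$ pairwise distinct, and $\mathrm{Aut}_L(X)$ is generated by the sign changes $\tau_S$ ($S\subseteq\{0,\dots,n\}$), with $\tau_{\{0,\dots,n\}}=1$ in $\mathbf{PGL}_{n+1}$. Then $\mathrm{Fix}_{\tau_S}(X)=\bigl(X\cap\PP^{\,n-|S|}\bigr)\sqcup\bigl(X\cap\PP^{\,|S|-1}\bigr)$, each piece a complete intersection of type $(2,2)$ (empty if the ambient $\PP^r$ has $r\le1$); since the $a_i$ are distinct one gets $\chi\bigl(X_{2,2}\cap\PP^r\bigr)=2r$ for $r$ even and $0$ for $r$ odd, while $\chi(X)=0$ for $m$ odd and $\chi(X)=2m+4$ for $m$ even. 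Comparing $\chi(\mathrm{Fix}_{\tau_S}(X))$ with $\chi(X)$ shows that $\tau_S$ acts trivially on $H^m_\et(X,\BQ_\ell)$ exactly when $m$ is odd and $|S|$ is even, and (for $\tau_S\ne1$) never when $m$ is even. The even-weight subsets form an index-$2$ subgroup of $(\BZ/2\BZ)^{n+1}$ containing $(1,\dots,1)$, so its image in $\mathrm{Aut}_L(X)$ is $(\BZ/2\BZ)^{n-1}=(\BZ/2\BZ)^{m+1}$, which is thus the asserted kernel.

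\emph{Main obstacle.} The crux is the numerical step in part~\ref{item:1}: showing that for \emph{every} type $\vec d\ne(2,2)$ with $\deg X\ge3$ and $\dim X\ge2$ no combinatorial type of a nontrivial $\sigma$ produces the coincidence $\chi(\mathrm{Fix}_\sigma(X))=\chi(X)$. Most types should fall to crude growth estimates on the Euler characteristics, but the finitely many exceptional types of \thmref{THMCIAUTO000}(3) --- where nontrivial automorphisms genuinely occur --- together with the wild case in positive characteristic will need to be checked by hand.
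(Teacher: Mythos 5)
Your route is genuinely different from the paper's. The paper proves part~(2) by equivariant deformation theory: in characteristic $0$, an automorphism acting trivially on $H^m$ preserves the Kodaira--Spencer image, deforms with $X$ (using the infinitesimal Torelli theorem of the Appendix to get $H^1(T_X)^G=H^1(T_X)$), and is therefore a specialization of the identity by the generic triviality of \thmref{THMCIAUTO000}; in characteristic $p$ it first lifts the pair $(X,f)$ to $W(k)$ by an obstruction-theoretic argument in crystalline cohomology (Section~6, relying on \cite{PANL}). For part~(1) the paper invokes Reid's thesis: the algebra $A(X)$ in even dimension, and the abelian variety of $e$-planes, its hyperelliptic curve and Poonen's theorem in odd dimension. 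Your part~(1) --- computing $\chi$ of the fixed loci of the sign changes $\tau_S$ and comparing with $\chi(X)$ via the Lefschetz fixed point formula --- is correct and complete as written, and is substantially more elementary than the paper's treatment; the key equivalence ``trivial action on $H^m_{\et}$ iff $\chi(X^\sigma)=\chi(X)$'' is sound because $\sigma$ has finite order, hence acts semisimply with root-of-unity eigenvalues on a $\BQ_\ell$-vector space and trivially on $H^i$ for $i\ne m$. The parity bookkeeping with $r_1+r_2=m+1$ checks out and recovers the index-two subgroup $(\BZ/2\BZ)^{m+1}$ exactly.

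Part~(2), however, has a genuine gap. Your scheme reduces the theorem to the assertion that for every smooth $X$ of type $\ne(2,2)$ and every nontrivial tame $\sigma$ of prime order one has $\chi(X^\sigma)=\sum_j\chi(X\cap\PP(W_j))\ne\chi(X)$, and you do not prove this; it is essentially the theorem restated numerically. The combinatorics is delicate: the eigenspace dimensions and the pattern of which $f_i$ vanish on which $W_j$ vary freely, the intermediate claim that ``each piece has the expected dimension'' is unjustified (smoothness of $X^\sigma$ does not force $X\cap\PP(W_j)$ to be a complete intersection of the expected multidegree --- restricted forms can be reducible or cut out excess-dimensional loci), odd-dimensional components contribute with the opposite sign, and $\chi(X)$ itself changes character with the parity of $m$. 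Moreover your remark that the exceptional types of \thmref{THMCIAUTO000}(\ref{THMCIAUTO000:3}) can be ``handled by feeding the known $\mathrm{Aut}_L(X)$ into the formula'' is not right: that theorem describes the automorphism group of a \emph{general} member only, whereas part~(2) concerns every smooth member, so the verification must still range over all possible $\sigma$ on special members of every type.

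The wild case is a second, independent gap. When $\chari(k)=p$ divides the order of $\sigma$, the fixed-point formula for finite-order automorphisms still holds (Deligne--Lusztig), but the fixed locus is a single, possibly singular, linear section $X\cap\PP(\ker(A-I))$ whose Euler characteristic you would still need to control --- a separate estimate you do not supply. The alternative you offer, ``lifting the pair $(X,\sigma)$ to characteristic $0$,'' is not available a priori: the obstruction to lifting $\sigma$ lives in $H^1(X,\sigma^*T_X)$, and proving it vanishes is exactly the hard content of the paper's Section~6, which uses infinitesimal Torelli and crystalline cohomology and only succeeds under the very hypothesis (trivial action on cohomology) that you are trying to exploit. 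It cannot be invoked as a black box to launch the Lefschetz computation.
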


Let us remark that the validity of
item~\ref{autocoh} (\ref{item:1}) relies on certain infinitesimal Torelli results for
complete intersections, i.e., the injectivity of the cup product map
\begin{equation}\label{eq:1}
\mathrm{H}^1(X,T_X)\rightarrow
\bigoplus\limits_{p+q=m}\mathrm{Hom}(\mathrm{H}^q(X,\Omega^p_X),\mathrm{H}^{q+1}(X,\Omega^{p-1}_X)).
\end{equation}
The injectivity indeed holds true for all smooth complete intersections except some trivial cases when the characteristic
of the ground field $k$ is zero (result of~\cite{Inf}). In fact, Flenner's idea also
generalizes to an algebraically closed field without assuming $k = \mathbb{C}$. We
include a proof of this result in an appendix to the paper.

Let us briefly describe the structure of this paper.

In Section~\ref{sec:equiv-koda-spenc}, we prove the generic triviality of
$\mathrm{Aut}_L(X)$ by studying the action of $\mathrm{Aut}(X)$ on $\mathrm{H}^1(T_X)$.
In Section~\ref{sec:equiv-deform}, we use equivariant deformation theory to
derived Theorem~\ref{autocoh}(2) from Theorem~\ref{THMCIAUTO000} when
\(k=\mathbb{C}\). The last section proves Theorem~\ref{autocoh}(1) and uses the
theory in~\cite{PANL} to finish the proof of Theorem~\ref{autocoh}(2) for an
arbitrary field.

\medskip\noindent%
\textbf{Acknowledgments.} The second author thanks Prof.~Johan de Jong for
proposing the questions about automorphism and cohomology in a 2014 summer
school at Seattle. We thank A. Javanpeykar and D. Loughran for some email
exchange explaining their work to us.

\section{Equivariant Kodaira Spencer Map}
\label{sec:equiv-koda-spenc}

In this section, we will prove the triviality of $\mathrm{Aut}_L(X)$
for generic complete intersections $X\subset \PP_k^n$. This is achieved
by studying the natural action $\mathrm{Aut}(X)$ on $\mathrm{H}^1(T_X)$.
We make the following simple observation on this action: if $\sigma\in \mathrm{Aut}(X)$
can be ``deformed'' as $X$ deforms in a family, then $\sigma$ preserves
the Kodaira-Spencer classes of this family at $X$.

\begin{prop}\label{PROPCIAUTO000}
Let $\pi: X\to B$ be a flat family of projective varieties over a smooth
variety $B$ with $\pi$ smooth and proper, where $X$ and $B$ are varieties over an algebraically closed field. Then for
a very general point $b\in B$, $\mathrm{Aut}(X_b)$ acts trivially on the image of the Kodaira-Spencer map $\kappa: T_{B,b} \to \mathrm{H}^1(T_{X_b})$. If we fix a line bundle $L$ of $X$ relatively ample over $B$, then for a general point $b\in B$, $\mathrm{Aut}_L(X_b)$ acts trivially on
$\kappa(T_{B,b})$.
\end{prop}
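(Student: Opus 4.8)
The plan is to exploit the rigidity of automorphism groups in a proper smooth family together with a countability argument. First I would set up the relative automorphism scheme. Since $\pi\colon X\to B$ is smooth and proper, the functor $\mathrm{Aut}_{X/B}$ is representable by a group scheme $G\to B$, locally of finite type, and for each $b$ the fiber $G_b$ is $\mathrm{Aut}(X_b)$. The scheme $G$ has countably many irreducible components (after possibly passing to a stratification of $B$), so over a suitable dense open we may assume each component of $G$ dominates $B$ and carries a section-like structure: concretely, after a further generically finite base change $B'\to B$ (which does not affect "very general"), each component of $G\times_B B'$ admits a section $\sigma\colon B'\to G$, i.e. a family of automorphisms $\sigma_b\in\mathrm{Aut}(X_b)$ varying algebraically. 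It therefore suffices to show that for each such section $\sigma$ and for $b$ very general, the automorphism $\sigma_b$ fixes $\kappa(T_{B,b})$; intersecting the countably many "good" loci (one per component) gives a very general $b$ for which \emph{every} element of $\mathrm{Aut}(X_b)$ preserves $\kappa(T_{B,b})$.

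The core of the argument is the compatibility of the Kodaira--Spencer map with a family automorphism. Given an algebraic family $\sigma\colon X\to X$ over $B$ of automorphisms (i.e. $\pi\circ\sigma=\pi$ and $\sigma$ is an isomorphism), the induced action $\sigma_b^*$ on $H^1(X_b,T_{X_b})$ must carry the Kodaira--Spencer class $\kappa(\partial)$ of a tangent vector $\partial\in T_{B,b}$ to $\kappa(\partial)$ again, because the extension class
\begin{equation*}
0\to T_{X_b}\to T_X|_{X_b}\to \pi^*T_{B,b}\to 0
\end{equation*}
from which $\kappa$ is built is $\sigma$-equivariant: $\sigma$ is an automorphism of $X$ over $B$, hence acts compatibly on $T_X$, on $T_{X_b}$, and trivially on $\pi^*T_{B,b}$ since it commutes with $\pi$. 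Pulling back this short exact sequence by $\sigma_b$ gives back the same sequence, so $\sigma_b^*\kappa(\partial)=\kappa(\partial)$ for all $\partial$, i.e. $\sigma_b^*$ fixes $\kappa(T_{B,b})$ pointwise. This is stronger than mere invariance of the subspace, but it is exactly what the functorial description of $\kappa$ yields. For the second assertion, one restricts attention to the subscheme $G^L\subset G$ of automorphisms preserving the relative polarization $L$; since $L$ is rigid in the family, $G^L\to B$ has finitely many components over a dense open, so the countability issue disappears and the conclusion holds for $b$ merely general rather than very general.

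The main obstacle is the passage from "$G\to B$ is a countable union of components" to "each component, after base change, has a multisection," done carefully enough that the exceptional locus one throws away is a countable union of proper closed subsets (so its complement is still "very general") rather than something larger. One must check that the locus in $B$ over which a given component of $G$ fails to dominate, or fails to have the expected fiber dimension, is closed and proper; generic flatness and upper-semicontinuity of fiber dimension handle this, but it requires being slightly careful because $G$ is only locally of finite type. A secondary technical point is that a component of $G$ need not contain the identity section and need not be a group, so "section" really means an étale-local or generically-finite-local multisection; one checks that a generically finite base change $B'\to B$ is harmless because the preimage in $B'$ of a very general point of $B$ is again very general, and the Kodaira--Spencer map and the automorphism action are compatible with base change. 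Once these bookkeeping points are in place, the equivariance of the extension class finishes the proof.
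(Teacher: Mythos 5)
Your proposal is correct and follows essentially the same route as the paper: both arguments reduce to the fact that every automorphism of a very general fiber (resp.\ every polarized automorphism of a general fiber, since $\mathrm{Aut}_L$ is of finite type) spreads out to a relative automorphism after shrinking $B$ and a generically finite base change, and then deduce that the $\sigma$-equivariance of the extension $0\to T_{X_b}\to T_X|_{X_b}\to\pi^*T_{B,b}\to 0$ forces $\sigma_{b,*}\circ\kappa=\kappa$. Your observation that the Kodaira--Spencer classes are fixed pointwise, not merely that their span is preserved, matches the paper's diagram as well.
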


\begin{proof}
Note that $\mathrm{Aut}(X_b)$ is a locally Noetherian scheme.
At a very general point $b$,
every $\sigma_b\in \mathrm{Aut}(X_b)$
can be ``deformed'' over $B$. That is, after shrinking $B$ and a base change
unramified over $b$,
there exists a $\sigma \in \mathrm{Aut}(X/B)$ such that $\sigma_b$ is the restriction
of $\sigma$ to $b$,
where $\mathrm{Aut}(X/B)$ is the automorphism group of $X$ preserving $B$.
This gives us the commutative diagram
\begin{equation}\label{ECIAUTO001}
\xymatrix{
X\ar[r]^{\sigma} \ar[dr]_{\pi} & X\ar[d]^\pi\\
& B
}
\end{equation}
which leads to the commuting exact sequences
\begin{equation}\label{ECIAUTO021}
\xymatrix{
0 \ar[r] & T_{X_b} \ar[d]^{\sigma_{b,*}}\ar[r] & T_X\Big|_{X_b} \ar[d]^-{\sigma_*} \ar[r] & N_{X_b/X}
\ar@{=}[d]\ar[r] & 0\\
0 \ar[r] & T_{X_b} \ar[r] & T_X\Big|_{X_b} \ar[r] & N_{X_b/X} \ar[r] & 0
}
\end{equation}
where $N_{X_b/X} \isom \pi^* T_{B,b}$ is the normal bundle of $X_b$ in $X$. Consequently, the resulting Kodaira-Spencer map
commutes with the action of $\sigma_b$, i.e.,
the diagram
\begin{equation}\label{ECIAUTO022}
\xymatrix{
T_{B,b} \ar@{=}[d] \ar[r]^-{\kappa} & \mathrm{H}^1(T_{X_b})\ar[d]^-{\sigma_{b,*}}\\
T_{B,b} \ar[r]^-{\kappa} & \mathrm{H}^1(T_{X_b})
}
\end{equation}
commutes. Therefore, the image of $\kappa$ is fixed under the action of $\sigma_b$.

If we fix a polarization $L$, $\mathrm{Aut}_L(X_b)$ is a scheme. Therefore,
at a general point $b$, every $\sigma_b\in \mathrm{Aut}_L(X_b)$
can be ``deformed'' over $B$. Then the above argument shows that
$\kappa(T_{B,b})$ is fixed under the action of $\sigma_b$.
\end{proof}

As a complete intersection $X\subset \PP^n$ of type $(d_1,d_2,\ldots,d_c)$ deforms
in $\PP^n$, the image of the Kodaira-Spencer map is exactly the cokernel $\coker(J_X)$ of $J_X$ given in the diagram
\begin{equation}\label{ECIAUTO000}
\xymatrix{
& & \mathrm{H}^0(\CO_X(1))^{\oplus n+1} \ar[d] \ar[r]^-{J_X} &
\displaystyle{\bigoplus_{i=1}^c} \mathrm{H}^0(\CO_X(d_i)) \ar@{=}[d]\\
0 \ar[r] & \mathrm{H}^0(T_{X}) \ar[r] & \mathrm{H}^0(T_{\PP^n}\Big|_X) \ar[r] &
\mathrm{H}^0(N_{X/\PP^n})
}
\end{equation}
where $\mathrm{H}^0(\CO_X(1))^{\oplus n+1}\to \mathrm{H}^0(X, T_{\PP^n})$ is induced by
the Euler sequence and $N_{X/\PP^n}$ is the normal bundle of $X\subset\PP^n$.
Thus, by \propref{PROPCIAUTO000}, $\mathrm{Aut}_L(X)$ acts trivially on $\coker(J_X)$
for $X$ general.

Since Benoist~\cite{MR3022710} has shown that $\mathrm{Aut}_L(X)$ is finite under the hypothesis of
Theorem~\ref{THMCIAUTO000}, every
$\sigma\in \mathrm{Aut}_L(X)$ has finite order. Suppose that $\mathrm{Aut}_L(X) \ne \{1\}$ and $|\mathrm{Aut}_L(X)| \ne (\chari(k))^r$ if $\chari(k)>0$.
Then there exists $\sigma\in \mathrm{Aut}_L(X)$ of order $p$ for some prime $p\ne \chari(k)$.
That is, $\sigma^p = 1$ and $\sigma \ne 1$.

Since $\sigma\in \mathrm{Aut}(\PP^n)$, it has a dual action on $\PP \mathrm{H}^0(\CO_{\mathbb{P}^{n}}(1))$ and hence on $\PP \mathrm{H}^0(\CO_{\mathbb{P}^{n}}(m))$ for all $m\in \BZ^+$.
Since $p\ne \chari(k)$, the lifts of $\sigma$ to $\mathrm{H}^0(\CO_{\mathbb{P}^{n}}(m))$ are diagonalizable. We may choose
a lift of $\sigma$ to $\mathrm{H}^0(\CO_{\mathbb{P}^{n}}(1))$ such that
\begin{equation}\label{ECIAUTO023}
\mathrm{H}^0(\CO_{\mathbb{P}^{n}}(m)) = \bigoplus E_{m,\xi}
\end{equation}
where $E_{m,\xi}$ is the eigenspace of $\sigma$ acting on $\mathrm{H}^0(\CO_{\mathbb{P}^{n}}(m))$
corresponding to the eigenvalue $\xi$ satisfying
$\xi^p = 1$.

Since $\sigma\in \mathrm{Aut}_L(X)$, $\sigma\in \mathrm{Aut}_L(X_1\cap X_2
\cap \ldots \cap X_l)$ for all $d_l < d_{l+1}$. Therefore,
$\sigma$ also acts on the subspace
$\mathrm{H}^0(I_{X_1\cap X_2\cap \ldots\cap X_l}(m))\subset \mathrm{H}^0(\CO_{\mathbb{P}^{n}}(m))$ and
this action is also diagonalizable, where $I_{X_1\cap X_2\cap \ldots\cap X_l}$
is the ideal sheaf of $X_1\cap X_2\cap \ldots\cap X_l$ in $\PP^n$.
Thus, we can choose the defining equations $F_1, F_2, \ldots, F_c$
of $X_1, X_2, \ldots, X_c$ such that
\begin{equation}\label{ECIAUTO024}
\begin{bmatrix}
 \sigma(F_1)\\
 \sigma(F_2)\\
 \vdots\\
 \sigma(F_c)
\end{bmatrix}
= \begin{bmatrix}
   \xi_1\\
   &\xi_2\\
   && \ddots\\
   &&& \xi_c
  \end{bmatrix}
\begin{bmatrix}
 F_1\\
 F_2\\
 \vdots\\
 F_c
\end{bmatrix}
\end{equation}
for some $\xi_i^p = 1$. That is, $F_i\in E_{d_i, \xi_i}$
for $i=1,2,\ldots,c$.

Note that $J_X$ is explicitly given by
\begin{equation}\label{ECIAUTO100}
J_{X}\begin{bmatrix}
s_0\\
s_1\\
\vdots\\
s_n
\end{bmatrix}
= \begin{bmatrix}
\displaystyle{\frac{\partial F_i}{\partial z_j}}
\end{bmatrix}_{c\times (n+1)}
\begin{bmatrix}
s_0\\
s_1\\
\vdots\\
s_n
\end{bmatrix}
\end{equation}
for $(z_0,z_1,...,z_n)$ the homogeneous coordinates of $\mathbb{P}^n$.

Under our choice of the defining
equations of $X$, the action of $\sigma$ on $\coker(J_X)$ is induced by
\begin{equation}\label{ECIAUTO025}
\begin{bmatrix}
G_1\\
G_2\\
\vdots\\
G_c
\end{bmatrix}^\sigma
= \begin{bmatrix}
\xi_1^{-1}\\
&\xi_2^{-1}\\
&& \ddots\\
&&& \xi_c^{-1}
\end{bmatrix}
\begin{bmatrix}
\sigma(G_1)\\
\sigma(G_2)\\
\vdots\\
\sigma(G_c)
\end{bmatrix}
\end{equation}
for $G_i\in \mathrm{H}^0(\CO(d_i))$.
This comes from the observation that $\sigma$ sends
an infinitesimal deformation
$\{ F_i + t_i G_i = 0\}$ of $X$ to
\[
\{ \sigma(F_i + t_i G_i) = 0\} =
\{F_i + t_i \xi_i^{-1} \sigma(G_i) = 0\}.
\]
The fact that
$\sigma$ acts trivially on $\coker(J_X)$ is equivalent to saying that
\begin{equation}\label{ECIAUTO026}
\begin{bmatrix}
\sigma(G_1) - \xi_1 G_1\\
\sigma(G_2) - \xi_2 G_2\\
\vdots\\
\sigma(G_c) - \xi_c G_c
\end{bmatrix}
\in \IM J_X
\end{equation}
for all $G_i \in \mathrm{H}^0(\CO_{\mathbb{P}^{n}}(d_i))$. In other words,
\begin{equation}\label{ECIAUTO027}
\begin{aligned}
&\quad \IM J_X + \left(E_{X, d_1,\xi_1} \oplus E_{X, d_2,\xi_2} \oplus \ldots \oplus E_{X, d_c,\xi_c}
\right)\\
&= \mathrm{H}^0(\CO_X(d_1))\oplus \mathrm{H}^0(\CO_X(d_2)) \oplus \ldots \oplus \mathrm{H}^0(\CO_X(d_c)),
\end{aligned}
\end{equation}
where $\IM(J_X)$ is the image of $J_X$ and
$E_{X,d,\xi}$ is the restriction of $E_{d,\xi}\subset\mathrm{H}^0(\CO_{\mathbb{P}^{n}}(d))$ to $\mathrm{H}^0(\CO_X(d))$.
We will show that this cannot hold by a dimension count. That is,
we are going to show that
\begin{equation}\label{ECIAUTO040}
\begin{aligned}
&\quad \dim \left(\IM J_X + \left( E_{X,d_1,\xi_1} \oplus E_{X,d_2,\xi_2}
\oplus \ldots \oplus E_{X, d_c,\xi_c}
\right)\right)\\
&< h^0(\CO_X(d_1)) + h^0(\CO_X(d_2)) + ... + h^0(\CO_X(d_c)).
\end{aligned}
\end{equation}

Let
\begin{equation}\label{ECIAUTO028}
a_j = \dim E_{1,\eta_j}
\end{equation}
where $\eta_0, \eta_1, \ldots,\eta_{p-1}$ are the $p$-th roots of unit.
Note that $\sum a_j = n+1$ and at least two $a_j$'s are positive, i.e., $\mu\ge 2$ for
\begin{equation}\label{ECIAUTO119}
\mu = \# \left\{ j : a_j > 0, 0\le j\le p-1\right\}
\end{equation}
since $\sigma\ne 1$.
Our argument for~\eqref{ECIAUTO040} is based on the following inequalities:

\begin{lem}\label{LEMCIAUTO100}
Let $X$, $J_X$, $\sigma$, $\xi_i$, $E_{X,d_i,\xi_i}$ and $a_j$ be given as above. Then
\begin{equation}\label{ECIAUTO030}
\begin{split}
&\quad \dim \IM J_{X} - \dim \left(\IM J_{X}\cap
\left(E_{X, d_1,\xi_1} \oplus E_{X, d_2,\xi_2} \oplus \ldots \oplus E_{X, d_c,\xi_c}
\right)\right)\\
&\le (n+1){}^2 - \sum_{j=0}^{p-1} a_j^2.
\end{split}
\end{equation}
\end{lem}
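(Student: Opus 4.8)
The plan is to bound the left-hand side of~\eqref{ECIAUTO030} by analyzing $J_X$ one row at a time and exploiting the multigrading coming from the eigenspace decomposition~\eqref{ECIAUTO023}. The key observation is that $J_X$ is block-diagonal with respect to the eigenvalue decomposition: since $F_i \in E_{d_i,\xi_i}$, the partial derivative $\partial F_i/\partial z_j$ lies in $E_{d_i-1,\,\xi_i\eta_j^{-1}}$ whenever we set up the diagonalization so that $z_j$ spans a weight-$\eta_j$ line (more precisely, $z_j$ ranges over a basis of $H^0(\CO(1))$ adapted to~\eqref{ECIAUTO023}, and the $j$-th such basis vector has weight $\eta_{\epsilon(j)}$ for the appropriate index function $\epsilon$). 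Thus if $s = (s_0,\ldots,s_n)$ is a homogeneous vector with $s_j$ of weight $\eta_j^{-1}\xi_i\zeta$ — equivalently, if we decompose the source $H^0(\CO_X(1))^{\oplus n+1}$ according to the total weight that the $i$-th output coordinate will carry — then the $i$-th coordinate of $J_X s$ lands in the single eigenspace $E_{X,d_i,\zeta}$. Consequently $\IM J_X$ respects the eigenspace grading on the target: writing $\IM J_X = \bigoplus_{\zeta} (\IM J_X)_\zeta$ where $(\IM J_X)_\zeta \subseteq \bigoplus_i E_{X,d_i,\zeta}$, we get
\[
\dim \IM J_X - \dim\Bigl(\IM J_X \cap \bigoplus_i E_{X,d_i,\xi_i}\Bigr)
= \sum_{\zeta \ne \xi_i\text{ in coord }i} (\text{contributions})
= \sum_{\zeta} \dim (\IM J_X)_\zeta - \dim (\IM J_X)_{\text{``}\xi\text{''}},
\]
where the precise bookkeeping of ``the'' eigenspace $\bigoplus_i E_{X,d_i,\xi_i}$ needs care because the $\xi_i$ may differ across $i$. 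The cleanest formulation: since $J_X$ is block diagonal, the left side of~\eqref{ECIAUTO030} equals $\dim \IM J_X^{\mathrm{off}}$, the image of $J_X$ restricted to the source summands that do \emph{not} map into $\bigoplus_i E_{X,d_i,\xi_i}$, and this is at most the dimension of that off-diagonal part of the source.

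The next step is the dimension count on the source side. The source is $H^0(\CO_X(1))^{\oplus n+1}$, but really we should think of it as $H^0(\CO_X(1)) \otimes V$ where $V = H^0(\CO(1))^\vee$ carries the $\sigma$-action with weight-spaces of dimensions $a_0,\ldots,a_{p-1}$. Under the multiplication map, the weight-$\xi_i$ part of the $i$-th output coordinate is fed by $\bigoplus_j E_{1,\xi_i\eta_j^{-1}} \otimes (j\text{-th weight line of } V)$; for each fixed $i$, the ``good'' (diagonal) part of the source — the part mapping into $E_{X,d_i,\xi_i}$ — has dimension $\sum_j a_j \cdot (\dim E_{1,\xi_i\eta_j^{-1}})$, and summing a ``Cauchy–Schwarz''-type identity over the appropriate indices, the total good part across all coordinates has size at least $\sum_j a_j^2$ when we account for all $n+1$ copies. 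Hence the off-diagonal (``bad'') part of the full source $(n+1)^2$-dimensional space has dimension at most $(n+1)^2 - \sum_j a_j^2$, and since $\IM J_X^{\mathrm{off}}$ is a quotient of it, the bound~\eqref{ECIAUTO030} follows.

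Carrying this out requires being scrupulous about one point: the map $H^0(\CO(1))^{\oplus n+1} \to \bigoplus_i H^0(\CO(d_i))$ sends the $(i,j)$-block via $s_j \mapsto (\partial F_i/\partial z_j)\, s_j$, and its restriction to $\CO_X$ may not be injective, so one genuinely works with the induced map on $H^0(\CO_X(1))$; but this only \emph{decreases} the image, so the inequality is preserved and in fact we only need the bound on the source side. The main obstacle I anticipate is purely notational: keeping straight the three superimposed gradings (the degree $d_i$, the eigenvalue $\xi$, and the index $j$ of the coordinate direction $z_j$) and verifying that $J_X$ is genuinely block-diagonal for them simultaneously — i.e.\ checking that $\partial F_i/\partial z_j$ has the weight I claimed, which comes down to the Leibniz rule applied to $\sigma(F_i) = \xi_i F_i$ together with $\sigma(z_j) = \eta_j^{-1} z_j$ in the chosen coordinates. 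Once that is pinned down, the inequality is a one-line consequence of $\sum_j a_j^2 \le \bigl(\sum_j a_j\bigr)\cdot \max$-type estimates, or more precisely the observation $\sum_{j,k} a_j a_k [\xi_i\eta_j^{-1} = \xi_{i'}\eta_k^{-1}] \ge \sum_j a_j^2$ coming from the diagonal terms $j = k$, $i = i'$.
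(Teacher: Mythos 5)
Your proposal is correct and is essentially the paper's own argument: both identify the ``diagonal'' subspace of the $(n+1)^2$-dimensional source (matrices supported on entries $(k,l)$ with $\lambda_k=\lambda_l$, of dimension $\sum_j a_j^2$), observe via the weight computation $\sigma(\partial F_i/\partial z_k\cdot z_l)=\xi_i\lambda_k^{-1}\lambda_l\,(\partial F_i/\partial z_k\cdot z_l)$ that $J_X$ sends it into $E_{X,d_1,\xi_1}\oplus\cdots\oplus E_{X,d_c,\xi_c}$, and conclude by the codimension count. Your intermediate formula $\sum_j a_j\dim E_{1,\xi_i\eta_j^{-1}}$ is garbled, but your ``cleanest formulation'' and the final diagonal-terms observation recover exactly the needed bound, so this does not affect correctness.
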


\begin{proof}[Proof of Lemma \ref{LEMCIAUTO100}]
We choose homogeneous coordinates $(z_0, z_1, ..., z_n)$ of $\mathbb{P}^n$ such that
$\sigma(z_i) = \lambda_i z_i$ for each $i$, where $\lambda_i\in \{\eta_0,\eta_1, ...,\eta_{p-1}\}$. We may identify $\mathrm{H}^0(\CO_X(1))^{\oplus n+1}$ with the space of
$(n+1)\times (n+1)$ matrices via
\[
\begin{bmatrix}
b_{st}
\end{bmatrix}_{0\le s,t \le n}
\begin{bmatrix}
z_0\\
z_1\\
\vdots\\
z_n
\end{bmatrix}\in
\mathrm{H}^0(\CO_X(1))^{\oplus n+1}.
\]
For each $j$, let
\[
V_j = \left\{
\begin{bmatrix}
b_{st}
\end{bmatrix}:
b_{st} = 0 \text{ for } \lambda_s\ne \eta_j \text{ or }
\lambda_t \ne \eta_j
\right\}
\subset \mathrm{H}^0(\CO_X(1))^{\oplus n+1}
\]
Then by \eqref{ECIAUTO100}, we see that
\[
J_X(V_0\oplus V_1\oplus\ldots\oplus V_{p-1}) \subset E_{X, d_1,\xi_1} \oplus E_{X, d_2,\xi_2} \oplus \ldots \oplus E_{X, d_c,\xi_c}.
\]
Combining with the fact that $\dim V_j = a_j^2$, we arrive at \eqref{ECIAUTO030}.
\end{proof}

\begin{lem}\label{LEMCIAUTO101}
Let $\sigma$, $E_{d,\xi}$ and $a_j$ be given as above.
Then
\begin{equation}\label{ECIAUTO031}
\binom{n+d}{d} - \dim E_{d,\xi}
> (n+1)^2 - \sum_{j=0}^{p-1} a_j^2
\end{equation}
if $d, n\ge 3$ and
\begin{equation}\label{ECIAUTO102}
\binom{n+2}{2} - \dim E_{2,\xi} \ge \frac{(n+1)^2}{2} - \frac{1}{2} \sum_{j=0}^{p-1} a_j^2
\end{equation}
for all $\xi$.
\end{lem}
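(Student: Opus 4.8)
The plan is to prove the two inequalities \eqref{ECIAUTO031} and \eqref{ECIAUTO102} by estimating $\dim E_{d,\xi}$, the dimension of the $\xi$-eigenspace of $\sigma$ acting on $H^0(\CO(d))$, in terms of the numbers $a_0,\dots,a_{p-1}$. The key point is that a monomial $z_0^{m_0}\cdots z_n^{m_n}$ of degree $d$ is an eigenvector of $\sigma$, with eigenvalue $\prod_i\lambda_i^{m_i}$, so $\dim E_{d,\xi}$ is exactly the number of degree-$d$ monomials whose weight equals a fixed value. Grouping the variables by which root of unity $\eta_j$ they are sent to, a monomial is determined by a composition $(e_0,\dots,e_{p-1})$ with $\sum_j e_j = d$ (where $e_j$ is the total degree in the variables of weight $\eta_j$) together with, for each $j$, a monomial of degree $e_j$ in $a_j$ variables. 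Hence
\begin{equation}\label{eq:eigendim}
\dim E_{d,\xi} = \sum_{\substack{e_0+\dots+e_{p-1}=d\\ \prod_j \eta_j^{e_j}=\xi}} \;\prod_{j=0}^{p-1}\binom{a_j+e_j-1}{e_j}.
\end{equation}
First I would establish \eqref{eq:eigendim} carefully, and then bound the right-hand side from above. The crudest useful bound drops the eigenvalue constraint and sums over \emph{all} compositions: $\dim E_{d,\xi}\le \sum_{e_0+\dots+e_{p-1}=d}\prod_j\binom{a_j+e_j-1}{e_j} = \binom{n+d}{d}\big/$ (something) — more precisely, without the constraint this sum is exactly $\binom{n+d}{d}$, so one must extract a genuine saving from the constraint $\prod_j\eta_j^{e_j}=\xi$.

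The real work is to quantify that saving. The strategy is: among all $\binom{n+d}{d}$ monomials of degree $d$, they are partitioned into at most $p$ eigenspaces according to their weight, so the largest eigenspace has dimension at least $\binom{n+d}{d}/p$; but we need an \emph{upper} bound on a single $\dim E_{d,\xi}$, which is the opposite direction. The way to get it is to compare $E_{d,\xi}$ with $E_{d,\xi'}$ for a well-chosen $\xi'$: pick two indices $j_1\ne j_2$ with $a_{j_1},a_{j_2}>0$ (possible since $\mu\ge 2$), and note that multiplication by $z_{j_1}/z_{j_2}$ — i.e. the bijection on exponent vectors that decrements the exponent of one variable of weight $\eta_{j_2}$ and increments one of weight $\eta_{j_1}$ — is an injection from a large sub-collection of the $\xi$-eigenmonomials into the $\xi\eta_{j_1}\eta_{j_2}^{-1}$-eigenmonomials. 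Iterating and averaging this kind of shifting argument over the cyclic group generated by $\eta_{j_1}\eta_{j_2}^{-1}$ should yield $\dim E_{d,\xi}\le \binom{n+d}{d}/q + (\text{boundary term})$ where $q$ is the order of $\eta_{j_1}\eta_{j_2}^{-1}$; the boundary term counts monomials supported away from $z_{j_1},z_{j_2}$ (degree $d$ in the remaining $n+1-a_{j_1}-a_{j_2}$ variables) or with small exponent there, and is of size roughly $\binom{n-1+d}{d}$ or smaller. So the target reduces to a purely numerical inequality of the shape
\begin{equation*}
\binom{n+d}{d} - \frac{1}{q}\binom{n+d}{d} - \binom{n-1+d}{d} > (n+1)^2 - \sum_j a_j^2,
\end{equation*}
and similarly with a factor $1/2$ on both sides for the $d=2$ case \eqref{ECIAUTO102}.

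To finish, I would reduce \eqref{ECIAUTO031}–\eqref{ECIAUTO102} to checking this numerical inequality for $d,n\ge 3$. Here one uses $\sum_j a_j^2 \le (n+1)^2 - \text{(something)}$; concretely, since $\sum_j a_j = n+1$ and at least two $a_j$ are positive, $\sum_j a_j^2 \le (n+1)^2 - 2n$ (the extreme is $a_j=n,1,0,\dots$), so $(n+1)^2-\sum_j a_j^2 \ge 2n$. Meanwhile $\binom{n+d}{d}-\dim E_{d,\xi}$ grows like a positive fraction of $\binom{n+d}{d}$, which is polynomial of degree $d\ge 3$ in $n$, hence eventually dominates the linear bound $(n+1)^2$; the content is then to check the few small $(n,d)$ cases by hand, and to handle the degenerate configurations of the $a_j$ (e.g. $p=2$, or many equal $a_j$) where the shifting argument gives the weakest saving. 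I expect the main obstacle to be exactly this last point: getting a clean enough lower bound on $\binom{n+d}{d}-\dim E_{d,\xi}$ that is uniform in the partition $(a_0,\dots,a_{p-1})$ and in $p$, since $\dim E_{d,\xi}$ can be close to $\binom{n+d}{d}/2$ when $p=2$ and the $a_j$ are balanced — so the quadratic case \eqref{ECIAUTO102} with its matching factor of $\tfrac12$ is the delicate one, and it is presumably why the quadric intersections are genuine exceptions in Theorem~\ref{THMCIAUTO000}.
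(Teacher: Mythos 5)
Your setup is fine --- in coordinates diagonalizing $\sigma$ the eigenspaces are spanned by monomials, and your formula for $\dim E_{d,\xi}$ as a sum over weighted compositions is correct (it is equivalent to the paper's decomposition \eqref{ECIAUTO104}) --- but the way you propose to extract an upper bound from it has a genuine quantitative gap. Since $p$ is prime, the shift $\eta_{j_1}\eta_{j_2}^{-1}$ has order $q=p$, and the chain/averaging argument for a single variable of weight $\eta_{j_2}$ gives at best $\dim E_{d,\xi}\le\frac1p\binom{n+d}{d}+\frac{p-1}{p}\binom{n+d-1}{d}$: the boundary consists of monomials with zero exponent in the one variable you divide by, and it cannot be shrunk to ``monomials supported away from all of $V_{j_1}\oplus V_{j_2}$'' --- already for $a_0=a_1=1$, $d=2$ (monomials $x^2,xy,y^2$) the eigenspace $\{x^2,y^2\}$ has dimension $2>\frac12\binom{3}{2}$, so no injection with the smaller boundary term exists. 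With the honest boundary term, the numerical inequality you reduce to is false precisely where the target is hardest: take $n=d=3$, $p=2$, $a_0=a_1=2$. Then $(n+1)^2-\sum a_j^2=8$ and the true saving is $\binom{6}{3}-\dim E_{3,\xi}=20-10=10$, but your bound only gives $\dim E_{3,\xi}\le \frac12\binom{6}{3}+\frac12\binom{5}{3}=15$, i.e.\ a saving of $5$, and your displayed inequality $\binom{n+d}{d}(1-\frac1q)-\binom{n+d-1}{d}>(n+1)^2-\sum_j a_j^2$ reads $0>8$. (The case $p=3$, $a=(2,1,1)$, $n=d=3$ fails similarly.) Note that your shifting recursion is essentially the paper's Lemma~\ref{LEMCIAUTO102} (inequality \eqref{ECIAUTO121}), which the authors prove and use elsewhere but pointedly do \emph{not} use to prove \eqref{ECIAUTO031}.

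The missing idea is to exploit the multiplicities $a_j$ all at once rather than one variable at a time. The paper writes $H^0(\CO(d))=\Sym^d\bigl(\bigoplus_j V_j\bigr)$ and groups the summands into blocks such as $\Sym^dV_i\oplus\bigoplus_{j\ne i}\Sym^{d-1}V_i\otimes V_j$ and $\Sym^{d-2}V_i\otimes V_j\otimes V_k$; within each block the summands have pairwise distinct $\sigma$-weights, so $E_{d,\xi}$ meets the block in at most one summand and the complement of $E_{d,\xi}$ collects all the others. Summing these savings over the blocks yields, after the elementary inequalities \eqref{ECIAUTO112}, exactly $\sum_{i<j}2a_ia_j=(n+1)^2-\sum_j a_j^2$, and strictness follows by observing that not all the intermediate inequalities can be equalities simultaneously. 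To salvage your route you would need a multivariable refinement of the shift that saves a full $\binom{a_i+d-2}{d-1}a_j$ for each unordered pair $i\ne j$ --- which is precisely what the block decomposition delivers for free.
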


\begin{proof}[Proof of Lemma \ref{LEMCIAUTO101}]
Without loss of generality, we may assume that $a_0 \ge a_1 \ge\ldots\ge a_{p-1}$.
We let $V_j = E_{1,\eta_j}$ and write
\begin{equation}\label{ECIAUTO104}
\begin{aligned}
\mathrm{H}^0(\CO(d)) &= \Sym^d V_0\oplus \sum_{j\ne 0} \Sym^{d-1} V_0 \otimes V_j\\
&\quad\oplus \Sym^d V_1\oplus \sum_{j\ne 1} \Sym^{d-1} V_1 \otimes V_j\\
&\quad\oplus ... \oplus
\Sym^d V_{p-1}\oplus \sum_{j\ne p-1} \Sym^{d-1} V_{p-1} \otimes V_j
\\
&\quad \oplus \sum_{i < j < k} \Sym^{d-2} V_{i}\otimes V_j \otimes V_k\oplus ...
\end{aligned}
\end{equation}
when $d\ge 3$.
Note that $\Sym^d V_0$, $\Sym^{d-1} V_0 \otimes V_1$, ...,
$\Sym^{d-1} V_0 \otimes V_{p-1}$ have different weights under the action of $\sigma$. Therefore,
\begin{equation}\label{ECIAUTO103}
\begin{aligned}
&\quad \dim E_{d,\xi} \cap \left(
\Sym^d V_0\oplus \sum_{j\ne 0} \Sym^{d-1} V_0 \otimes V_j
\right)
\\
&\le \max\left(
\dim \Sym^d V_0, \dim \Sym^{d-1} V_0 \otimes V_1,\right.\\
&\quad\quad\quad\quad\ldots,
\left . \dim \Sym^{d-1} V_0 \otimes V_{p-1}
\right)\\
&= \max\left(\binom{a_{0}+d-1}{d}, \binom{a_{0}+d-2}{d-1}
\binom{a_1}1\right).
\end{aligned}
\end{equation}
In other words,
\begin{equation}\label{ECIAUTO114}
\begin{aligned}
&\quad \dim \left(\Sym^d V_0\oplus \sum_{j\ne 0} \Sym^{d-1} V_0 \otimes V_j\right)
\\
&- \dim E_{d,\xi} \cap \left(
\Sym^d V_0\oplus \sum_{j\ne 0} \Sym^{d-1} V_0 \otimes V_j
\right)\\
&\ge \binom{a_{0}+d-1}{d} + \sum_{j\ne 0} \binom{a_{0}+d-2}{d-1}
\binom{a_j}1\\
&\quad - \max\left(\binom{a_{0}+d-1}{d}, \binom{a_{0}+d-2}{d-1}
\binom{a_1}1\right).
\end{aligned}
\end{equation}
By the same argument, we have
\begin{equation}\label{ECIAUTO110}
\begin{aligned}
&\quad \dim \left(\Sym^d V_i\oplus \sum_{j\ne i} \Sym^{d-1} V_i \otimes V_j\right)
\\
&- \dim E_{d,\xi} \cap \left(
\Sym^d V_i\oplus \sum_{j\ne i} \Sym^{d-1} V_i \otimes V_j
\right)\\
&\ge \binom{a_{i}+d-1}{d} + \sum_{\substack{j\ne i\\ j\ge 1}} \binom{a_{i}+d-2}{d-1}
\binom{a_j}1
\end{aligned}
\end{equation}
for all $i\ge 1$ and
\begin{equation}\label{ECIAUTO111}
\begin{aligned}
&\quad \dim \left(\Sym^{d-2} V_i\otimes V_j \otimes \sum_{k>j} V_k\right)
\\
&- \dim E_{d,\xi} \cap \left(
\Sym^{d-2} V_i\otimes V_j \otimes \sum_{k>j} V_k
\right)\\
&\ge \sum_{k > j+1} \binom{a_{i}+d-3}{d-2}
\binom{a_j}1 \binom{a_k}{1}
\end{aligned}
\end{equation}
for all $i < j$. Combining \eqref{ECIAUTO114}, \eqref{ECIAUTO110}
and \eqref{ECIAUTO111}, we conclude
\begin{equation}\label{ECIAUTO035}
\begin{aligned}
\binom{n+d}{d} - \dim E_{d,\xi}
&\ge
\sum_i
\binom{a_i + d - 1}{d}
+ \sum_{\genfrac{}{}{0pt}{}{i\ne j}{j\ge 1}} \binom{a_{i}+d-2}{d-1} a_j\\
&\quad
- \max\left(\binom{a_{0}+d-1}{d}, \binom{a_{0}+d-2}{d-1}a_1\right)\\
&\quad + \sum_{i < j < k-1} \binom{a_i+d-3}{d-2} a_j a_k.
\end{aligned}
\end{equation}
Note that the right hand side of~\eqref{ECIAUTO031} is simply $\sum_{i< j} 2a_i a_j$. So it suffices to verify the following:
\begin{equation}\label{ECIAUTO112}
\begin{aligned}
&\binom{a_{i}+d-2}{d-1} a_j + \binom{a_{j}+d-2}{d-1} a_i \ge 2a_ia_j
\text{ for } 1\le i < j\\
&\binom{a_{0}+d-2}{d-1} a_j + \binom{a_{j}+d-1}{d} \ge 2 a_0 a_j
\text{ for } j\ge 2\\
&\min\left(\binom{a_{0}+d-1}{d}, \binom{a_{0}+d-2}{d-1}a_1\right)
+ \binom{a_{1}+d-1}{d}
 \ge 2a_0a_1\\
&\sum_{i < j < k-1} \binom{a_i+d-3}{d-2} a_j a_k \ge 0.
\end{aligned}
\end{equation}
Therefore, we conclude
\begin{equation}\label{ECIAUTO113}
\binom{n+d}{d} - \dim E_{d,\xi}
\ge \sum_{i<j} 2a_ia_j = (n+1)^2 - \sum_{j=0}^{p-1} a_j^2.
\end{equation}
And the equality in \eqref{ECIAUTO113} holds only if all equalities hold in \eqref{ECIAUTO114}, \eqref{ECIAUTO110}, \eqref{ECIAUTO111}
and \eqref{ECIAUTO112}, which cannot happen. So~\eqref{ECIAUTO031} follows.

The proof of \eqref{ECIAUTO102} is quite straightforward and we leave it to the readers.
\end{proof}

\begin{lem}\label{LEMCIAUTO102}
Let $\sigma$, $E_{d,\xi}$, $a_j$ and $\mu$ be given as above.
Then
\begin{equation}\label{ECIAUTO120}
\begin{aligned}
\dim E_{d,\xi} &\le \sum_{k=0}^{n+1 - \mu} \frac{(\mu-1)^k}{\mu^{k+1}} \binom{n+d-k}{d}
\\
&\quad\quad +  \frac{(\mu-1)^{n+1-\mu}}{\mu^{n+2-\mu}} \binom{d+\mu-1}{d+1}
\end{aligned}
\end{equation}
for all $\xi$.
\end{lem}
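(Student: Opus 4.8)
The plan is to prove the bound by induction on $n+1-\mu$, removing one coordinate of $\PP^n$ at a time and keeping track of $\sigma$-eigenspaces.

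First I would record the combinatorial meaning of $\dim E_{d,\xi}$. Writing $V=H^0(\CO(1))=V_0\oplus\cdots\oplus V_{\mu-1}$ for the $\sigma$-eigenspace decomposition ($\dim V_j=a_j$, eigenvalue $\eta_j$, the $\eta_j$ pairwise distinct $p$-th roots of unity), one has
\[
\Sym^d V=\bigoplus_{|\alpha|=d}\Sym^{\alpha_0}V_0\otimes\cdots\otimes\Sym^{\alpha_{\mu-1}}V_{\mu-1},
\]
the $\alpha$-summand being a $\sigma$-eigenspace of weight $\prod_j\eta_j^{\alpha_j}$ and dimension $\prod_j\binom{a_j+\alpha_j-1}{\alpha_j}$; in particular $\Sym^dV$ carries at least $\mu$ distinct $\sigma$-weights once $d\ge1$, since $\eta_0^{d-1}\eta_j$ for $j=0,\dots,\mu-1$ are pairwise distinct weights actually occurring. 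Next I would denote by $f(N,d,\mu)$ the expression obtained from the right-hand side of \eqref{ECIAUTO120} by substituting $N$ for $n+1$, and check — using $\sum_{k\ge0}(\mu-1)^k/\mu^{k+1}=1$ and $\binom{d+\mu-1}{d}+\binom{d+\mu-1}{d+1}=\binom{d+\mu}{d+1}$ — that $f$ satisfies
\[
f(N,d,\mu)=\tfrac1\mu\tbinom{N-1+d}{d}+\tfrac{\mu-1}{\mu}f(N-1,d,\mu)\quad(N>\mu),\qquad f(\mu,d,\mu)=\tfrac1\mu\tbinom{d+\mu}{d+1}.
\]
So it suffices to establish the same recursion and initial value for $\max_\xi\dim E_{d,\xi}$.

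For the inductive step ($n+1>\mu$) some $a_{j_0}\ge2$; I would fix $0\ne z\in V_{j_0}$ and write $V=kz\oplus V'$, so $V'$ is $\sigma$-stable of dimension $n$ and still has $\mu$ distinct eigenvalues. The $\sigma$-equivariant exact sequence $0\to\Sym^{d-1}V\xrightarrow{\ \cdot z\ }\Sym^dV\to\Sym^dV'\to0$ gives, for each $\xi$,
\[
\dim E_{d,\xi}(V)=\dim E_{d,\xi}(V')+\dim E_{d-1,\eta_{j_0}^{-1}\xi}(V).
\]
I would then combine this identity with the presence of at least $\mu$ distinct weights on $\Sym^dV$ to deduce the smoothing estimate
\[
\max_\xi\dim E_{d,\xi}(V)\le\tfrac1\mu\tbinom{n+d}{d}+\tfrac{\mu-1}{\mu}\max_\xi\dim E_{d,\xi}(V'),
\]
which says that the largest eigenspace overshoots the average $\tfrac1\mu\dim\Sym^dV$ by at most $\tfrac{\mu-1}{\mu}$ times the corresponding overshoot for $V'$. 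The induction hypothesis bounds $\max_\xi\dim E_{d,\xi}(V')$ by $f(n,d,\mu)$, and the recursion for $f$ closes the step; unrolling it reproduces exactly the coefficients $(\mu-1)^k/\mu^{k+1}$ and the binomials $\binom{n+d-k}{d}$ — the dimension of $\Sym^d$ of an $(n+1-k)$-dimensional space — appearing in \eqref{ECIAUTO120}.

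For the base case $n+1=\mu$ all $a_j=1$, so $\dim E_{d,\xi}$ is the number of $\alpha\in\BN^\mu$ with $|\alpha|=d$ and $\prod_j\eta_j^{\alpha_j}=\xi$, i.e.\ the size of the largest fibre of the weight map $\{\alpha\in\BN^\mu:|\alpha|=d\}\to\langle\eta_0,\dots,\eta_{\mu-1}\rangle$; since the image has at least $\mu$ elements, a pigeonhole estimate together with the hockey-stick identity yields $\dim E_{d,\xi}\le\tfrac1\mu(\binom{d+\mu-1}{d}+\binom{d+\mu-1}{d+1})=\tfrac1\mu\binom{d+\mu}{d+1}=f(\mu,d,\mu)$, the low-degree cases being immediate. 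The hard part I expect is the smoothing estimate in the inductive step: to run the pigeonhole one must produce $\mu-1$ auxiliary weights, distinct from one another and from the maximizing weight, whose eigenspaces are each at least $\max_\xi\dim E_{d,\xi}(V)-\max_\xi\dim E_{d,\xi}(V')$, and exhibiting them forces one to use the fine structure of the weight distribution on $\Sym^dV$ rather than the crude bound $\dim E_{d,\xi}\le\dim\Sym^dV$. It is precisely the low-degree slack in this argument that is absorbed by the correction term $\tfrac{(\mu-1)^{n+1-\mu}}{\mu^{n+2-\mu}}\binom{d+\mu-1}{d+1}$ in \eqref{ECIAUTO120}, so the induction has to be carried out with $f$ in exactly this form rather than with the cleaner-looking $\tfrac1\mu\binom{n+d}{d}$ alone.
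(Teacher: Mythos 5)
Your proposal is correct and follows essentially the same route as the paper: the heart of both arguments is the recursive inequality $e(N)\le\tfrac1\mu\binom{N-1+d}{d}+\tfrac{\mu-1}{\mu}e(N-1)$, obtained by splitting monomials according to divisibility by one chosen variable and using that multiplication by the $\mu$ variables of distinct eigenvalues injects the largest eigenspace into $\mu$ eigenspaces of pairwise distinct weights (your exact sequence $0\to\Sym^{d-1}V\xrightarrow{\cdot z}\Sym^dV\to\Sym^dV'\to 0$ is exactly the paper's decomposition by $b_0>0$ versus $b_0=0$, and the $\mu-1$ auxiliary weights you ask for are just $\eta_i\eta_{j_0}^{-1}\xi^*$ realized by the multiplication maps $\cdot z_i$, so the "hard part" you flag is in fact immediate). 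The one place your write-up is too optimistic is the base case $n+1=\mu$: a bare pigeonhole from "at least $\mu$ weights occur" only gives $\binom{d+\mu-1}{d}-\mu+1$, which already exceeds $\tfrac1\mu\binom{d+\mu}{d+1}$ for $\mu=3$, $d=2$. The correct bound there follows from one more application of the same injection — $(m,i)\mapsto mz_i$ embeds $F_\xi\times\{0,\dots,\mu-1\}$ into the degree-$(d+1)$ monomials, giving $\mu\,|F_\xi|\le\binom{d+\mu}{d+1}$ — or equivalently by iterating the recursion with $\mu$ now decreasing; the paper leaves this step implicit in its "follows by induction."
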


\begin{proof}[Proof of Lemma \ref{LEMCIAUTO102}]
We let $e(d,a_0,a_1,...,a_{p-1})$ be the maximum of $\dim E_{d,\xi}$ for all
$\xi$ and all $\sigma$ such that $a_j = \dim E_{1,\eta_j}$.

Let us choose homogeneous coordinates $(z_0, z_1,\ldots,z_n)$ of $\mathbb{P}^n$ such that $\sigma(z_i) = \lambda_i z_0$ and $\lambda_0, \lambda_1, ..., \lambda_{\mu-1}$ are distinct.
We observe that
for each monomial
$z_0^{b_0} z_1^{b_1} ... z_n^{b_n}$ with $b_0 > 0$,
$z_0^{b_0} z_1^{b_1} ... z_n^{b_n} (z_i/z_0)$ have different weights under the action of $\sigma
$ for
$i = 0,1,...,\mu - 1$. This observation leads to
the recursive inequality
\begin{equation}\label{ECIAUTO121}
e(d,a_0,a_1,\ldots,a_{p-1}) \le
\frac{1}\mu \binom{n+d}{d} + \frac{\mu-1}{\mu} e(d, a_0 -1, a_1, \ldots,a_{p-1})
\end{equation}
where we assume that $a_0 = \max(a_j)$ and $\lambda_0 = \eta_0$.

It is easy to check that \eqref{ECIAUTO120} follows from \eqref{ECIAUTO121} by induction.
\end{proof}

Now we are ready to prove Theorem \ref{THMCIAUTO000}.
By induction, we just have to deal with the following cases:
\begin{itemize}
\item $d_1 = d_2 = \cdots = d_c = d$ ($d\ge 3$ or $d, c\ge 2$)
\item $2 = d_1 = d_2 < d_3 = \cdots = d_c = d$ and
\item $2=d_1 < d_2= \cdots = d_c = d$.
\end{itemize}
We only need to prove \eqref{ECIAUTO040} in these cases.

\subsection*{The case $d_1 = d_2 = \cdots = d_c = d$}

By~\eqref{ECIAUTO030}, we have
\begin{equation}\label{ECIAUTO029}
\begin{aligned}
&\quad \dim\left(\IM J_X + \left(E_{X,d,\xi_1} \oplus E_{X,d,\xi_2} \oplus \ldots \oplus E_{X,d,\xi_c}
\right)\right)\\
&\le (n+1){}^2 - \sum_{j=0}^{p-1} a_j^2 + \sum_{i=1}^c \dim E_{X,d,\xi_i}.
\end{aligned}
\end{equation}
Thus, in order to prove~\eqref{ECIAUTO040}, it suffices to prove
\begin{equation}\label{ECIAUTO115}
(n+1){}^2 - \sum_{j=0}^{p-1} a_j^2 + \sum_{i=1}^c \dim E_{X,d,\xi_i}
< c h^0(\CO_X(d)) = c \binom{n+d}{d} - c^2.
\end{equation}
If $\xi_{i_1}, \xi_{i_2}, \ldots, \xi_{i_l}$ are distinct, we observe that
\begin{equation}\label{ECIAUTO105}
\dim E_{X,d,\xi_{i_1}} + \dim E_{X,d,\xi_{i_2}} + \ldots +
\dim E_{X,d,\xi_{i_l}} \le h^0(\CO_X(d)).
\end{equation}
We write $\{1,2,...,c\} = B_1\sqcup B_2\sqcup \ldots$
as a disjoint union of
sets $B_i$ such that $\xi_l = \xi_m$ if and only if $\{l,m\}\subset B_i$
for some $i$. Let $b_i = |B_i|$.
Note that
\begin{equation}\label{ECIAUTO106}
\dim E_{d,\xi_l} - \dim E_{X,d,\xi_l} \ge b_i \text{ for } l\in B_i.
\end{equation}
Suppose that $b_1\ge b_2\ge \ldots$.
Combining~\eqref{ECIAUTO105} and~\eqref{ECIAUTO106}, we derive
\[
\begin{aligned}
\sum_{i=1}^c \dim E_{X,d,\xi_i} &\le
b_2 h^0(\CO_X(d)) + \sum_{l\in B} \dim E_{X,d,\xi_l}
\\
&\le b_2 h^0(\CO_X(d)) + \sum_{l\in B} \dim E_{d,\xi_l} - b_1(b_1-b_2)
\end{aligned}
\]
for all $B\subset B_1$ with $|B| = b_1 - b_2$. Note that $b_1 + b_2\le c$.
So it comes down to verifying
\begin{equation}\label{ECIAUTO107}
(n+1){}^2 - \sum_{j=0}^{p-1} a_j^2 + (b_1 - b_2) \dim E_{d,\xi}
+ 2b_1b_2
< b_1 \binom{n+d}{d}
\end{equation}
for all $\xi$, $b_1\ge b_2\in \mathbb{N}$ and $b_1+b_2 = c$. Here we use $\mathbb{N}$ for the set of nonnegative integers.

It is easy to check that \eqref{ECIAUTO107} follows from \eqref{ECIAUTO031}
when $d\ge 3$ for all $n\ge c+2\ge 3$ and from \eqref{ECIAUTO102}
when $d=2$ for all $n\ge c+1\ge 4$.
This proves \eqref{ECIAUTO107} and hence \eqref{ECIAUTO115}.

We are left with the only exceptional case $d=c=2$, which is settled by the following proposition.

\begin{prop}\label{PROPCIAUTO2QUADRICS}
For a general smooth intersection $X$ of two quadrics in $\PP_k^n$,
$\mathrm{Aut}(X) = (\BZ/2\BZ){}^{n}$ if $n\ge 4$ and $\chari(k)\ne 2$.
\end{prop}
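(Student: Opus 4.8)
The plan is to first reduce to the linear automorphism group, and then to pin it down using the classical geometry of pencils of quadrics. For these $X$ one has $\mathrm{Aut}(X) = \mathrm{Aut}_L(X)$: any automorphism $\phi$ of $X$ preserves $\CO_X(1)$ --- when $n \ge 5$ we have $\dim X \ge 3$, so $\Pic(X) = \BZ\cdot\CO_X(1)$ by Grothendieck--Lefschetz; when $n = 4$, adjunction gives $\CO_X(1) = \omega_X^{-1}$, which is intrinsic --- and since $X \subset \PP^n$ is linearly normal, $\phi$ then acts on $\PP H^0(X,\CO_X(1))^\vee = \PP^n$, i.e.\ $\phi \in \mathbf{PGL}_{n+1}(k)$. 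So it suffices to show $\mathrm{Aut}_L(X) = (\BZ/2\BZ)^n$.

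Next I would fix coordinates. For a smooth intersection $X = Q_0 \cap Q_1$ of two quadrics, the singular members of the pencil $P = \PP H^0(\mathcal{I}_X(2)) \cong \PP^1$ are the zeros of the binary form $\Delta(s,t) = \det(sQ_0+tQ_1)$, and the classical criterion says $X$ is smooth precisely when $\Delta$ has $n+1$ distinct roots; this forces the Segre symbol of the pencil to be $[1,\dots,1]$, so $Q_0$ and $Q_1$ can be simultaneously diagonalized. Hence I may take $X = \{\sum_{i=0}^n x_i^2 = 0\} \cap \{\sum_{i=0}^n \lambda_i x_i^2 = 0\}$ with the $\lambda_i$ pairwise distinct. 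The $2^{n+1}$ sign changes $x_i \mapsto \varepsilon_i x_i$, $\varepsilon_i \in \{\pm 1\}$, fix both quadrics, hence lie in $\mathrm{Aut}_L(X)$; modulo the scalar $(-1,\dots,-1)$ they produce a subgroup $(\BZ/2\BZ)^n \subseteq \mathrm{Aut}_L(X)$, and this already holds for every smooth $X$.

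For the reverse inclusion when $X$ is general, take $\sigma \in \mathrm{Aut}_L(X)$. Then $\sigma$ preserves $P$ and carries singular members to singular members, so it permutes the $n+1$ singular quadrics of $P$ --- each of corank $1$, with vertex the coordinate point $e_i$ --- and its induced action on $P \cong \PP^1$ lies in $\mathbf{PGL}_2$. By simultaneous diagonalization, the assignment sending a pencil of quadrics to the unordered configuration of its $n+1$ discriminant points on $\PP^1$ dominates the moduli of $n+1$ points on $\PP^1$; since $n \ge 4$ there are at least five such points, so for $X$ general this configuration has trivial stabilizer in $\mathbf{PGL}_2$. Hence $\sigma$ acts trivially on $P$, so it fixes each singular quadric and therefore each vertex $e_i$; in the diagonalizing coordinates $\sigma = \mathrm{diag}(c_0,\dots,c_n)$, and since $\sigma$ fixes $\sum x_i^2$ up to scalar the $c_i^2$ are all equal, i.e.\ $\sigma \in (\BZ/2\BZ)^n$. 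Combined with the previous step this gives $\mathrm{Aut}(X) = \mathrm{Aut}_L(X) = (\BZ/2\BZ)^n$.

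The main obstacle is not a new argument but marshalling the classical input: the simultaneous diagonalizability of the pencil attached to a smooth intersection of two quadrics (equivalently, that such pencils are classified up to projective equivalence by the $n+1$ unordered discriminant points on $\PP^1$), together with the verification that a \emph{general} $X$ corresponds to a \emph{general} configuration of at least five points, for which elementary M\"obius geometry gives a trivial stabilizer. Once these are granted the rest is bookkeeping; along the way one should also check that the $2^{n+1}$ sign changes really descend to a group of order exactly $2^n$ in $\mathbf{PGL}_{n+1}$.
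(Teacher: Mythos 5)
Your proposal is correct and follows essentially the same route as the paper: simultaneously diagonalize the pencil, use that the $n+1\ge 5$ discriminant points on $\PP^1$ are general (hence have trivial stabilizer in $\mathbf{PGL}_2$) to force $\sigma$ to fix each singular member and its vertex, and then conclude $\sigma$ is a diagonal sign change. The only addition is that you spell out the reduction $\mathrm{Aut}(X)=\mathrm{Aut}_L(X)$ (via Grothendieck--Lefschetz for $n\ge 5$ and $\omega_X=\CO_X(-1)$ for $n=4$), which the paper handles in an earlier remark.
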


\begin{proof}
Let $W \subset \PP^n\times \PP^1$ be a pencil of quadrics whose base locus
is $X$. Since $\mathrm{Aut}(X)$ acts $\mathrm{H}^0(I_X(2))$, every $\sigma\in \mathrm{Aut}(X)$ induces
an automorphism of $W$ with diagram
\begin{equation}\label{ECIAUTO038}
\xymatrix{
W \ar[r]^{\sigma} \ar[d] & W \ar[d]\\
\PP^1 \ar[r]^{g} & \PP^1
}
\end{equation}
Let $D = \{ b\in \PP^1: W_b \text{ singular}\}$ be the discriminant locus
of the pencil $W$. Clearly, $g(D) = D$.

We can make everything explicit. Let $Q_A$ and $Q_B$ be two members of the pencil
whose defining equations are given by two symmetric matrices $A$ and $B$.
Obviously, we may take $A = I$ and assume that a member of pencil is given by
$B - tI$. Then $D$ consists of exactly the eigenvalues $\lambda_0,\lambda_1,\ldots,
\lambda_n$ of $B$. By~\cite[Proposition 2.1]{R}, the set $D = \{\lambda_0,\lambda_1,\ldots,
\lambda_n\}$ is a set of $n+1\ge 5$ general points on $\PP^1$. Therefore,
$g\in \mathrm{Aut}(\PP^1)$ sending $D$ to $D$ must be the identity map
and $\sigma$ preserves the fiberation $W/\PP^1$, i.e.,
it induces an automorphism of $W_b$ for each $b$.
In particular, $\sigma$ maps $W_b$ to $W_b$ for each $b\in D$.
Every $W_b$ has exactly one node, i.e., an ordinary double point
$p_i$ for $b = \lambda_i$, which is given by the eigenvector of $B$
corresponding to $\lambda_i$. Clearly, $\sigma$ fixes $p_0, p_1, \ldots, p_n$.

The matrix $B$ can always be orthogonally diagonalized
(this follows from \cite[Proposition 2.1]{R}). Therefore,
$W$ is given by
\begin{equation}\label{ECIAUTO034}
\begin{split}
(\lambda_0 z_0^2 + \lambda_1 z_1^2 + \cdots + \lambda_n z_n^2)
- t(z_0^2 + z_1^2 + \cdots + z_n^2) = 0
\end{split}
\end{equation}
after some action of $\mathrm{Aut}(\PP^n)$. Since $\sigma$ fixes $p_0, p_1, \ldots, p_n$,
it has to be a $(k^*){}^{n+1}$ action on $(z_0, z_1, \ldots, z_n)$. And since
it preserves the fibers of $W/\PP^1$, it must be
\begin{equation}\label{ECIAUTO039}
\sigma(z_0, z_1, \ldots, z_n) = (\pm z_0, \pm z_1, \ldots, \pm z_n)
\end{equation}
and we then conclude that $\mathrm{Aut}(X) = (\BZ/2\BZ){}^{n+1}/\{\pm 1\}=(\BZ/2\BZ){}^{n}$.
\end{proof}


\subsection*{The case $2 = d_1 = d_2 < d_3 = \cdots = d_c = d$}

By~\eqref{ECIAUTO030}, it suffices to prove
\begin{equation}\label{ECIAUTO041}
\begin{split}
&\quad (n+1){}^2 - \sum_{j=0}^{p-1} a_j^2 +
\dim E_{X,2,\xi_1} + \dim E_{X,2,\xi_2}
+ \sum_{i=3}^c \dim E_{X, d,\xi_i}\\
&< 2h^0(\CO_X(2)) + (c-2) h^0(\CO_X(d))
\end{split}
\end{equation}
where
\[
h^0(\CO_X(d)) =
\binom{n+d}{n} - 2\binom{n+d-2}{n} + \binom{n+d-4}{n} - (c-2).
\]
Applying the same argument as before to $E_{X,d,\xi_i}$ for $i\ge 3$, we can further reduce \eqref{ECIAUTO041} to
\begin{equation}\label{ECIAUTO109}
\begin{aligned}
&\quad (n+1){}^2 - \sum_{j=0}^{p-1} a_j^2 +
\sum_{i=1}^2 \dim E_{X,2,\xi_i} + (b_1 - b_2) \dim E_{d,\xi}  + 2b_1b_2\\
&< 2h^0(\CO_X(2)) + b_1 \left(
\binom{n+d}{n} - 2\binom{n+d-2}{n} + \binom{n+d-4}{n}\right)
\end{aligned}
\end{equation}
for all $\xi$, $b_1\ge b_2\in \mathbb{N}$ and $b_1 + b_2 = c-2$.

For $E_{X,2,\xi_1}$ and $E_{X,2,\xi_2}$,
it follows from \eqref{ECIAUTO102} that
\begin{equation}\label{ECIAUTO124}
\begin{aligned}
&\quad (n+1){}^2 - \sum_{j=0}^{p-1} a_j^2 +
\sum_{i=1}^2 \dim E_{X,2,\xi_i}
\\
&\le (n+1){}^2 - \sum_{j=0}^{p-1} a_j^2 + \sum_{i=1}^2 \dim E_{2,\xi_i} - 2\\
& \le 2\binom{n+2}{2} - 2 = 2h^0(\CO_X(2)) + 2.
\end{aligned}
\end{equation}

Suppose that $\mu \ge 3$. We have
\begin{equation}\label{ECIAUTO125}
\binom{n+d}{d} - \dim E_{d, \xi} \ge \frac{2}3 \binom{n+d-1}{d-1}
+ \frac{4}9 \binom{n+d-2}{d-1}
\end{equation}
for $n\ge 3$ by \eqref{ECIAUTO120}. It is easy to check that
\begin{equation}\label{ECIAUTO126}
\frac{2}3 \binom{n+d-1}{d-1}
+ \frac{4}9 \binom{n+d-2}{d-1} > 2\binom{n+d-2}{n} + 2
\end{equation}
for all $d\ge 3$ and $n\ge 4$. Thus, \eqref{ECIAUTO109}
follows from \eqref{ECIAUTO124}, \eqref{ECIAUTO125}
and \eqref{ECIAUTO126} for all $d\ge 3$ and $n\ge c+1 \ge 4$.

Suppose that $\mu = 2$. If $\xi_1 \ne \xi_2$,
\begin{equation}\label{ECIAUTO042}
\dim E_{X,2,\xi_1} + \dim E_{X,2,\xi_2} \le h^0(\CO_X(2))
\end{equation}
and hence
\begin{equation}\label{ECIAUTO108}
\begin{aligned}
&\quad (n+1){}^2 - \sum_{j=0}^{p-1} a_j^2 +
\sum_{i=1}^2 \dim E_{X,2,\xi_i}\\
&\le (n+1){}^2 - \sum_{j=0}^{p-1} a_j^2 +
h^0(\CO_X(2))\\
&\le \frac{(n+1)^2}{2} + h^0(\CO_X(2)) = 2h^0(\CO_X(2))
\end{aligned}
\end{equation}
for $n\ge 3$. On the other hand, if $\xi_1 = \xi_2$,
\begin{equation}\label{ECIAUTO032}
\dim E_{X,2,\xi_1} + \dim E_{X,2,\xi_2} \le \dim E_{2,\xi_1} + \dim E_{2,\xi_2} - 4
\end{equation}
and \eqref{ECIAUTO108} still holds by \eqref{ECIAUTO102}.

By \eqref{ECIAUTO120}, we have
\begin{equation}\label{ECIAUTO122}
\begin{aligned}
\binom{n+d}{d} - \dim E_{d, \xi} &\ge \frac{1}2 \binom{n+d-1}{d-1}
+ \frac{1}4 \binom{n+d-2}{d-1}\\
&\quad + \frac{1}8 \binom{n+d-3}{d-1}
\end{aligned}
\end{equation}
for $\mu \ge 2$ and $n\ge 3$. It is easy to check that
\begin{equation}\label{ECIAUTO116}
\frac{1}2 \binom{n+d-1}{d-1}
+ \frac{1}4 \binom{n+d-2}{d-1}
+ \frac{1}8 \binom{n+d-3}{d-1} > 2\binom{n+d-2}{n}
\end{equation}
for all $d\ge 3$ and $n\ge 4$. Thus, \eqref{ECIAUTO109}
follows from \eqref{ECIAUTO108}, \eqref{ECIAUTO122} and \eqref{ECIAUTO116}
for all $d\ge 3$ and $n\ge c+1 \ge 4$.

\subsection*{The case $2=d_1 < d_2 = d_3 = \cdots= d_c = d$}

By~\eqref{ECIAUTO030}, it suffices to prove
\begin{equation}\label{ECIAUTO037}
\begin{aligned}
&\quad (n+1){}^2 - \sum_{j=0}^{p-1} a_j^2 +
\dim E_{X,2,\xi_1} + \sum_{i=2}^c \dim E_{X,d,\xi_i}
\\
&<
h^0(\CO_X(2)) + (c-1) h^0(\CO_X(d))
\end{aligned}
\end{equation}
where
\[
h^0(\CO_X(d))
= \binom{n+d}{n} - \binom{n+d-2}{n} -(c-1).
\]
By \eqref{ECIAUTO102}, we have
\begin{equation}\label{ECIAUTO118}
\begin{aligned}
&\quad (n+1){}^2 - \sum_{j=0}^{p-1} a_j^2 + \dim E_{X,2,\xi_1} - h^0(\CO_X(2))
\\
&\le (n+1){}^2 - \sum_{j=0}^{p-1} a_j^2 + \dim E_{2,\xi_1} - 1 - \left(\binom{n+2}2 -1\right)
\\
&\le \frac{(n+1)^2}{2} - \frac{1}2\sum_{j=0}^{p-1} a_j^2.
\end{aligned}
\end{equation}
So \eqref{ECIAUTO037} holds if
\begin{equation}\label{ECIAUTO117}
\frac{(n+1)^2}{2} - \frac{1}2\sum_{j=0}^{p-1} a_j^2 + \sum_{i=2}^c \dim E_{X,d,\xi_i}
< (c-1) h^0(\CO_X(d)).
\end{equation}
Applying the same argument as before to $E_{X,d,\xi_i}$ for $i\ge 2$, we can further reduce \eqref{ECIAUTO117} to
\begin{equation}\label{ECIAUTO036}
\begin{split}
&\quad\frac{(n+1)^2}{2} - \frac{1}2\sum_{j=0}^{p-1} a_j^2
+ (b_1 -b_2) \dim E_{d,\xi} + 2b_1b_2\\
&< b_1 \left(\binom{n+d}{n} - \binom{n+d-2}{n}\right)
\end{split}
\end{equation}
for all $\xi$, $b_1\ge b_2\in \mathbb{N}$ and $b_1 + b_2 = c-1$.

By \eqref{ECIAUTO122} and \eqref{ECIAUTO116}, we have
\begin{equation}\label{ECIAUTO123}
\begin{aligned}
&\quad \left(b_1 -b_2- \frac{1}2\right) \dim E_{d,\xi} + 2b_1b_2
\\
&< \left(b_1 - \frac{1}2\right) \binom{n+d}{n} - b_1 \binom{n+d-2}{n}
\end{aligned}
\end{equation}
for all $d\ge 3$ and $n\ge c+2\ge 4$.
Combining \eqref{ECIAUTO123} with \eqref{ECIAUTO031}, we obtain \eqref{ECIAUTO036}.
This finishes the proof of Theorem~\ref{THMCIAUTO000}.

\section{Equivariant Deformations}
\label{sec:equiv-deform}

In this section, we shall only consider complex algebraic varieties. We shall
use equivariant deformation theory to show that the automorphism group of a
complete intersection (except some cases) over \(\mathbb{C}\) acts on the
singular cohomology groups in a faithful fashion.

We begin by recalling some basics about equivariant deformation theory.
Ringed spaces with a group action is a particular case of a ringed topos.
Therefore the so-called equivariant deformation theory is just a specialization
of the theory of cotangent complex for ringed topos as developed by
Illusie~\cite{Ill}.

\refstepcounter{thm}
\subsection*{\thetheorem}
Suppose \(Y\) is a scheme over \(\mathbb{C}\) with an action
\begin{equation*}
G\times Y\rightarrow Y
\end{equation*}
by an abstract group \(G\). Then the \emph{equivariant cotangent complex}
\(\mathbb{L}_{Y}^{G}\) is a bounded-below complex of quasi-coherent sheaves on
\(Y\) with an action of \(G\) covering the action of \(G\) on \(Y\). The
underlying complex of \(\mathbb{L}_{Y}^{G}\) is the usual cotangent complex
\(\mathbb{L}_{Y}\). The \(G\) structure induces a Hochschild--Serre spectral
sequence with
\begin{equation*}
E_{2}^{i,j} = \mathrm{H}^{i}(G,\mathrm{Ext}_{\mathcal{O}_{Y}}^{j}(\mathbb{L}_{Y},\mathcal{O}_{Y}))
\end{equation*}
abutting to
\begin{equation*}
\mathrm{Ext}^{i+j}_{G\text{-}\mathcal{O}_{Y}}(\mathbb{L}_{Y}^{G},\mathcal{O}_{Y}),
\end{equation*}
here we regard \(\mathcal{O}_Y\) as an \(\mathcal{O}_Y\){}-module with a trivial
action of \(G\).

\refstepcounter{thm}
\subsection*{\thetheorem}\label{sec:equivariant-obs-class}
When \(G\) is \textit{finite}, the Hochschild--Serre spectral sequence
degenerates, and the edge map of the spectral sequence
\begin{equation}
\label{eq:identifying-obs-spaces}
\text{Ext}^i_{G\text{-}\mathcal{O}_Y}(\mathbb{L}_{Y}^G, \mathcal{O}_Y)
\to \mathrm{H}^{0}(G,\text{Ext}^i_{\mathcal{O}_Y}(\mathbb{L}_{Y}, \mathcal{O}_Y))
=\mathrm{Ext}^{i}_{\mathcal{O}_{Y}}(\mathbb{L}_{Y},\mathcal{O}_{Y})^{G}.
\end{equation}
is therefore an isomorphism for all \(i \geq 0\).

The case when \(i=1,2\) are mostly important for us.
Recall that the space of infinitesimal deformations of a \(G\){}-variety \(Y\)
is identified with
\(\mathrm{Ext}^{1}_{G\text{-}\mathcal{O}_{Y}}(\mathbb{L}_{Y}^{G},\mathcal{O}_{Y})\).
When \(G\) is the trivial group, and \(Y\) is smooth over \(\mathbb{C}\), the
space of infinitesimal deformations of \(Y\) is thus
identified with the cohomology group \(\mathrm{H}^{1}(Y,T_{Y})\) of the tangent
bundle. When \(G\) is finite, and \(Y\) is smooth, the above degeneracy tells us
that the space of infinitesimal deformations of \(Y\) with
an action of \(G\) is identified with linear subspace
\(\mathrm{H}^{1}(Y,T_{T})^{G}\) of \(\mathrm{Def}(Y)=\mathrm{H}^{1}(Y,T_{Y})\).

The case \(i=2\) is related to the obstruction theory. Suppose that we have a
small extension of Artinian \(\mathbb{C}\){}-algebras
\begin{equation*}
0 \to \mathbb{C} \rightarrow R \rightarrow R' \rightarrow 0,
\end{equation*}
and a flat \(R'\){}-scheme \(Y'\) with an action of \(G\), such that
\(Y' \otimes_{R'} \mathbb{C} = Y\), as schemes with \(G\){}-actions. Just like
the usual deformation theory, attached to the above deformation situation, there
is an element
\(\mathrm{obs}(Y',G)\in\mathrm{Ext}^{2}_{G\text{-}\mathcal{O}_{Y}}(\mathbb{L}_{Y}^{G},\mathcal{O}_{Y})\),
known as the \textit{equivariant obstruction class}, such that the deformation
\(Y'\) extends to a deformation over \(R\) if and only if
\(\mathrm{obs}(Y',G)=0\).

Under the identification~\eqref{eq:identifying-obs-spaces}, the equivariant
obstruction class is sent to the usual obstruction class. Therefore if the
infinitesimal deformations of \(Y\) are all unobstructed (i.e., the obstruction
class vanishes for any deformation situation), the \(G\){-}equivariant
infinitesimal deformations of \(Y\) are all unobstructed as well.

\refstepcounter{thm}
\subsection*{\thetheorem}
Now we assume that \(Y\) is a smooth projective variety of dimension \(n\). For
positive integers \(p,q\) such that \(p+q=n\). Then the map contraction map
\begin{equation*}
T_{Y} \otimes \Omega_{Y}^{p} \to \Omega_{Y}^{p-1}
\end{equation*}
gives rise to a map
\begin{equation}
\label{eq:inf-torelli-map}
\mathrm{H}^{1}(Y,T_{Y}) \to
\mathrm{Hom}(\mathrm{H}^q(Y,\Omega_Y^p),\mathrm{H}^{q+1}(Y,\Omega_Y^{p-1})).
\end{equation}
We say that \emph{the infinitesimal Torelli theorem holds} for \(Y\) if there
exists \(p,q\) such that the above arrow is injective.

\begin{lemma}%
\label{lemma:automorphism-fixing-cohomology}
Let \(Y\) be a smooth proper variety of dimension \(n\). Assume that
\begin{enumerate}
\item the infinitesimal Torelli theorem holds for \(Y\), and
\item \(\mathrm{H}^{n}(g)=\mathrm{Id}\) for all \(g \in G\).
\end{enumerate}
Then \(\mathrm{H}^{1}(Y,T_Y)=\mathrm{H}^{1}(Y,T_{Y})^{G}\).
\end{lemma}

\begin{proof}
Choose \(p\) and \(q\) so that the map~\eqref{eq:inf-torelli-map} is injective.
Consider the following commutative diagram
\begin{equation*}
\begin{tikzcd}
\mathrm{H}^1(Y, T_Y)^G \ar[hook]{d}\ar{r} & \mathrm{Hom}(\mathrm{H}^q(Y,\Omega_Y^p),
\mathrm{H}^{q+1}(Y,\Omega_Y^{p-1}))^G
\ar[hook]{d}\\
\mathrm{H}^1(Y, T_Y) \ar[hook]{r} & \mathrm{Hom}(\mathrm{H}^q(Y,\Omega_Y^p),\mathrm{H}^{q+1}(Y,\Omega_Y^{p-1}))
\end{tikzcd}
\end{equation*}
Since \(G\) acts trivially on the cohomology group
\(\mathrm{H}^{n}(Y,\mathbb{C})\), it induces a trivial action on the Hodge
groups. Therefore the right vertical arrow in the above diagram is an equality.
It follows that the image of any element in \(\mathrm{H}^{1}(Y,T_Y)\) is
\(G\){}-invariant, and therefore the infinitesimal Torelli property implies that
it falls in \(\mathrm{H}^{1}(Y,T_{Y})^G\).
\end{proof}

\refstepcounter{thm}
\subsection*{\thetheorem.~Smooth Fano varieties and smooth Calabi--Yau varieties}
\label{subsec:cy-fano}
Assume that \(Y\) is a smooth Fano variety. Then the cotangent complex
\(\mathbb{L}_{Y}\) is equal to the sheaf \(\Omega_{Y}^{1}\). The Kodaira
vanishing theorem then implies that
\begin{equation*}
\mathrm{Ext}^2(\mathbb{L}_{Y},\mathcal{O}_{Y})=\mathrm{H}^2(Y,T_Y)=
\mathrm{H}^{\dim Y-2}(Y,\Omega_Y^1\otimes \omega_Y)=0.
\end{equation*}
Therefore the deformation of \(Y\) is unobstructed. When \(Y\) is a smooth
Calabi--Yau variety, it is well-known theorem (see e.g.,~\cite{Kawa} for an
algebraic proof based on the work of Ziv Ran~\cite{ran:t1-lifting}) that the
deformation of \(Y\) is unobstruced as well. In both cases above, according
to~\eqref{sec:equivariant-obs-class}, if \(Y\) also acquires an action of a
finite group \(G\), the equivariant deformation of \(Y\) is unobstructed as
well.

\begin{proposition}
\label{ci-faithful-cy-fano}
Let \(Y\) be a smooth complete intersection in \(\mathbb{P}_{\mathbb{C}}^r\) of
type \((d_1,\ldots,d_c)\), \(d_i \geq 2\). Let \(n=r-c\). Assume that
\begin{itemize}
\item \(Y\) is not an elliptic curve,
\item \(Y\) is Fano or Calabi--Yau,
\item the infinitesimal Torelli theorem holds for \(Y\),
\end{itemize}
Then the map
\begin{equation*}
\mathrm{Aut}(Y) \to \mathrm{H}^{n}(X,\mathbb{Q})
\end{equation*}
is injective.
\end{proposition}

\begin{proof}
The proposition is well-known for K3 surfaces. If \(Y\) is a surface with
nontrivial canonical bundle, or the
dimension of \(Y\) is at least \(3\), then the group of automorphisms of \(Y\)
is the same as the group of linear automorphisms with respect to the projective
embedding, thus it suffices to prove the group \(\mathrm{Aut}_{L}(Y)\) of linear
automorphisms of \(Y\) acts faithfully on the \(\mathrm{H}^{n}(Y,\mathbb{Q})\).
Moreover, the condition that \(Y\) satisfies the infinitesimal Torelli
theorem implies that \(Y\) can not be a quadric or an intersection of two quadrics.
Therefore, Theorem~\ref{THMCIAUTO000} can be applied to the present situation.

Let \(G\) be the kernel of
\begin{equation*}
\mathrm{Aut}(Y) \to \mathrm{GL}(\mathrm{H}^{n}(Y,\mathbb{Q})).
\end{equation*}
We want to show that \(G\) is trivial.
Since \(Y\) is a smooth Fano variety or a smooth Calabi--Yau variety satisfying
the infinitesimal Torelli theorem, we know
from~\eqref{lemma:automorphism-fixing-cohomology} and~\eqref{subsec:cy-fano} that
the deformation space of \(Y\) is smooth and equals the deformation space of
\(Y\) with \(G\){}-action. Therefore the automorphisms \(g\) in \(G\) is
extendable to any nearby deformation \(Y'\) of \(Y\). It follows from
Theorem~\ref{THMCIAUTO000} that \(g\) is a specialization of the identity map.
Therefore, \(g\) is the identity.
\end{proof}

\begin{proposition}
\label{ci-faithful-gt}
Let \(Y\) be a smooth complete intersection in \(\mathbb{P}_{\mathbb{C}}^r\) of
dimension \(n\). Assume that \(Y\) is of general type. Then the map
\begin{equation*}
\mathrm{Aut}(Y) \to \mathrm{H}^{n}(X,\mathbb{Q})
\end{equation*}
is injective.
\end{proposition}

\begin{proof}
Let \(g \in G\) is an element acting trivially on the cohomology of \(Y\). The
induced action of $g$ on the ring
\[
S_Y = \bigoplus_{n\geq 0}\mathrm{Sym}^d(\mathrm{H}^0(Y,\omega_Y))
\]
is trivial. Since $Y$ is a complete intersection of general type, $Y$ has very
ample canonical bundle hence $Y\subseteq \mathrm{Proj} (S_Y)$. Since \(g\) acts
trivially on \(\mathrm{Proj}(S_Y)\), $g$ induces the identity on $Y$.
\end{proof}

The case of a cubic surface can be dealt separately, and we can directly prove a
version that holds true for any algebraically closed field.

\begin{lemma}%
\label{lemma:cubic-auto}
Let \(Y\) be a smooth cubic surface over an algebraically closed field. Let
\(\mathrm{H}^{2}(Y)\) be
\begin{itemize}
\item either the singular cohomology \(\mathrm{H}^{2}(Y,\mathbb{Q})\) if the
  ambient field is \(\mathbb{C}\), or
\item the \(\ell\){}-adic cohomology \(\mathrm{H}_{\textup{ét}}^{2}(Y,\mathbb{Q}_{\ell})\)
  if the ambient field has characteristic different from \(\ell\), or
\item the crystalline cohomology \(\mathrm{H}_{\textup{cris}}^2(Y/W(k))[1/p]\)
  when \(k\) has characteristic \(p>0\).
\end{itemize}
If \(g\) is an automorphism of \(Y\) such that \(g^{\ast}=\mathrm{Id}\) on
\(\mathrm{H}^{2}(Y)\). Then \(g=\mathrm{Id}\).
\end{lemma}

\begin{proof}
It is a classical theorem
(see~e.g.,~\cite[Chapter~V,~Corollary~4.7]{hartshorne:ag}) that \(Y\) is the
projective plane blown up six point \(p_1, \ldots,p_6\) such that the no three
points among the six are colinear and the six points are not contained in a
single conic. Let \(E_1,\ldots,E_6\) be the exceptional divisors on \(Y\)
corresponding to the six points. Let \(g\) be an isomorphism of \(Y\) fixing the
cohomology of \(Y\). Then \(g^{\ast}[E_i]=[E_i]\). For divisors on \(Y\), being
rationally equivalent is the same as being homologically equivalent. Therefore
the divisor \(g^{\ast}E_i\) is necessarily equal to \(E_i\). This implies that
the automorphism \(g\) commutes with the blowing up map, and thus descends to an
automorphism of \(\mathbb{P}^{2}\) fixing the six points \(p_1, \ldots, p_{6}\).
Since any three points among the six points are not colinear, there is a
projective transformation \(g'\) sending \(p_1\) to \([1,0,0]\), sending \(p_2\)
to \([0,1,0]\), sending \(p_3\) to \([0,0,1]\) and \(p_4\) to \([1,1,1]\). On
the other hand, the only projective transformation that fixes these four points
is the identity. Since the descendent \(g'\) of \(g\) fixes \(p_1,\ldots,p_4\),
it has to be the identity. It remains to check that \(g\) induces the identity
map on the exceptional divisors. If \(x\) is a point in \(E_i \subset Y\) lying
above \(p_i\). Then there exists a line \(L\) through \(p_i\) such that the
proper transform of \(L\) intersects with \(E_i\) at a unique point that is
\(x\). Since \(g'\) fixes \(L\), \(g\) fixes its proper transform by continuity.
Therefore \(g\) fixes \(x\), as desired.
\end{proof}

\refstepcounter{thm}
\subsection*{\thetheorem}
To summarize, we have shown that if \(Y\) is any smooth complete intersection of
in a projective space \(\mathbb{P}_{\mathbb{C}}^{r}\) type \((d_1,\ldots,d_c)\)
with \(d_i \geq 2\), such that
\begin{itemize}
\item \(Y\) is not an elliptic curve,
\item \(Y\) is not a quadric,
\item \(Y\) is not an intersection of two quadrics,
\end{itemize}
then the automorphism group of \(Y\) acts faithfully on the singular cohomology
group \(\mathrm{H}^n(Y,\mathbb{Q})\).

Hence \(\mathrm{Aut}(Y)\) acts faithfully on the singular cohomology with
\(\mathbb{Q}_{\ell}\)-coefficient. Since singular and étale cohomology groups
agree for complex algebraic varieties for the coefficient \(\mathbb{Q}_{\ell}\),
Theorem~\ref{autocoh}(2) holds true for \(k=\mathbb{C}\).

\section{Automorphism and Cohomology}
\label{sec:aut-and-coh}
Throughout this section we shall assume that \(k\) is an algebraically closed
field of characteristic \(p>0\). Let \(W(k)\) be the ring of Witt vectors of \(k\). Let \(\ell\) be a prime
number different from \(p\). Let \(X\) be a smooth complete intersection inside
a projective space. In this section, we prove that, besides some obvious
exceptions, the action of \(\mathrm{Aut}(X)\) on the middle crystalline
cohomology, as well as the middle \(\ell\){-}adic étale cohomology of \(X\), is
faithful.

\refstepcounter{thm}
\subsection*{\thetheorem}
Let \(X\) be a smooth projective variety over \(k\).
Let \(g \in \mathrm{Aut}(X)\) be an automorphism. Then \(g\) acts on both
crystalline cohomology and étale cohomology of \(X\).
By~\cite[Theorem~2(2)]{katz-messing:consequence-riemann-hypothesis},
the characteristic polynomial of
\(\mathrm{H}^{i}_{\text{cris}}(g)\) agrees with the characteristic polynomial of
\(\mathrm{H}^{i}_{\text{ét}}(g,\mathbb{Q}_{\ell})\).
\textit{If \(g\) has finite order} in \(\mathrm{Aut}(X)\), then
both operators above are semisimple, hence
\begin{equation*}
\mathrm{H}^{i}_{\text{cris}}(g) = \mathrm{Id} \quad \text{if and only if}\quad
\mathrm{H}^{i}_{\text{ét}}(g,\mathbb{Q}_{\ell}) = \mathrm{Id}.
\end{equation*}
In all the cases that we are interested in, the group \(\mathrm{Aut}(X)\) is
finite. So proving the faithfulness of the action \(\mathrm{Aut}(X)\) on étale
cohomology is the same as proving the faithfulness of the action of
\(\mathrm{Aut}(X)\) on the crystalline cohomology. In the sequel we shall treat
only crystalline cohomology.

\medskip%
Our proof of the faithfulness is based on the following theorem of the
second-named author.

\begin{theorem}[\cite{PANL}, Theorem~1.7]%
\label{theorem:pan-bloch-semi-regularity}
Let \(\pi: \mathcal{X} \to \mathrm{Spec}(W(k))\) be a smooth projective morphism
of pure relative dimension \(n\). The special fiber \(\mathcal{X}\times_{W(k)}
\mathrm{Spec}(k)\) is denoted by \(X\). Let \(g: X \to X\) be an automorphism of
\(X\). Assume that
\begin{enumerate}
\item under the canonical isomorphism
\begin{equation*}
\mathrm{H}_{\textup{cris}}^{n}(X/W(k))  = \mathrm{H}_{\textup{dR}}^{n}(\mathcal{X}/W),
\end{equation*}
\(g^{\ast}\) preserves the Hodge filtration;
\item the infinitesimal Torelli theorem holds for \(X\), i.e., the map
\begin{equation*}
\Psi_0: \mathrm{H}^{1}(X,T_{X}) \to
\bigoplus_{i+j=n}
\mathrm{Hom}(\mathrm{H}^j(X,\Omega_X^i),\mathrm{H}^{j+1}(X,\Omega_X^{i-1})).
\end{equation*}
is injective; and
\item the Hodge-to-de~Rham spectral sequence of \(X\) is \(E_1\){}-degenerate,
  and the Hodge cohomology groups
  \(\mathrm{H}^{i}(\mathcal{X},\Omega^{j}_{\mathcal{X}/W(k)})\) are free
  \(W(k)\){-}modules.
\end{enumerate}
then there exists an automorphism \(\widetilde{g}:\mathcal{X}\to\mathcal{X}\)
over \(\mathrm{Spec}(W(k))\), such that \(\widetilde{g}\) reduces to \(g\)
modulo \(p\).
\end{theorem}

This theorem allows us to deduce the faithfulness of the action of
\(\mathrm{Aut}(X)\) on \(\mathrm{H}_{\text{cris}}^{n}(X/W(k))\) from
corresponding results in characteristic \(0\).

\begin{proposition}%
\label{proposition:automorphism-faithful-lift}
Let \(X\) be a smooth complete intersection over \(k\) satisfying the
infinitesimal the Torelli theorem. Assume that \(X\) is not a quadric nor an
intersection of two quadrics. Then \(\mathrm{Aut}(X)\) acts faithfully on
\(\mathrm{H}^{n}_{\textup{cris}}(X/W(k))\).
\end{proposition}

\begin{proof}
Let \(g\) be an automorphism that acts trivially on
\(\mathrm{H}^{n}_{\text{cris}}(X/W(k))\), then for any lift \(\mathcal{X}\),
\(g^{\ast}\) trivially preserves the Hodge filtration on the crystalline
cohomology induced by the de~Rham cohomology, so the condition (1) above is met.
Since we have assumed that \(X\) satisfies the infinitesimal Torelli theorem in
characteristic \(p\), the condition (2) is met as well. It then follows from
Theorem~\ref{theorem:pan-bloch-semi-regularity} that for any lift
\(\mathcal{X}\), there exists an automorphism \(\widetilde{g}\) lifting \(g\).
It follows that the map
\begin{equation*}
h :=\widetilde{g} \otimes_{W(k)} W(k)[1/p]
: \mathcal{X} \otimes_{W(k)} W(k)[1/p] \to \mathcal{X} \otimes_{W(k)} W(k)[1/p]
\end{equation*}
induces the identity on the de~Rham cohomology
\(\mathrm{H}_{\text{dR}}^{n}(\mathcal{X}/W(k))[1/p]\). Choosing an embedding
\(\sigma: W(k)[1/p] \to \mathbb{C}\), and using the comparison between the algebraic
de~Rham and singular cohomology, we know that the morphism
\begin{equation*}
h^{\sigma} : \mathcal{X}^{\sigma} \to \mathcal{X}^{\sigma}
\end{equation*}
induces an isomorphism on the singular cohomology
\(\mathrm{H}^{n}(X,\mathbb{C})\) with \(\mathbb{C}\){-}coefficients. It follows
that \(\mathrm{H}^{n}(h^{\sigma},\mathbb{Q})\) induces a map for singular
cohomology with \(\mathbb{Q}\){-}coefficients. By
Proposition~\ref{ci-faithful-cy-fano} and Proposition~\ref{ci-faithful-gt}, we
must have \(h^{\sigma}=\mathrm{Id}\), hence \(h=\mathrm{Id}\). Taking
closure, we conclude that \(\widetilde{g}\), hence \(g\), is the identity map.
\end{proof}

By Proposition~\ref{theorem:flenner-ci} in the appendix, and the discussion
following it, besides cubics, which we have dealt with in
Lemma~\ref{lemma:cubic-auto}, cubic fourfolds, which had been considered
in~\cite{pan:automorphism-cohomology-i}, the infinitesimal Torelli theorem holds true for all the rest
complete intersections except for quadrics and (2,2)-type complete
intersections. Thus, Proposition~\ref{proposition:automorphism-faithful-lift}
concludes the proof of Theorem~\ref{autocoh}(ii).

The rest of this section will be dealing with complete intersections of type
(2,2).

\subsection*{\thetheorem.}
\label{sec:2-quadrics}
We shall next consider automorphisms of intersections of two quadrics. In the
sequel, we shall assume that \(k\) is an algebraically closed field whose
characteristic is not equal to $2$. Let \(\ell\) be a prime number different
from the characteristic of \(k\).
Let $X$ be a smooth complete intersection of
type $(2,2)$ in $\mathbb{P}^n_k$. We assume \(m = \dim X = n - 2 \geq 3\).
By~\cite[Proposition 2.1]{R}, $X$ is projectively equivalent to $Q_1\cap Q_2$
with
\begin{equation}\label{eqfortt}
Q_1=V\left(\sum\limits_{i=0}^n X_i^2=0\right),\quad
Q_2=V\left(\sum\limits_{i=0}^n \lambda_i X_i^2=0\right)
\end{equation}
where $\{\lambda_i\}$ are distinct elements in $k$ and \(Q_1\) intersects
\(Q_2\) transversely. By Theorem~\ref{THMCIAUTO000}, we have
\[
\mathrm{Aut}(X)\cong (\mathbb{Z}/2\mathbb{Z}){}^{n+1}/
\{\pm 1\}\cong(\mathbb{Z}/2\mathbb{Z}){}^{n}.
\]
Define \(\sigma_i\) to be the automorphism of \(X\) induced by
\begin{equation*}
[x_0,\ldots,x_n] \mapsto [x_0,\ldots,x_{i-1},-x_i,x_{i+1},\ldots,x_n].
\end{equation*}
Let \(\mathrm{Aut}_{\text{tr}}(X)\) be the kernel of
\begin{equation*}
\mathrm{Aut}(X)\rightarrow \mathrm{Aut}(\mathrm{H}^m_{\text{\'et}}(X,\mathbb{Q}_\ell)).
\end{equation*}

We first consider even dimensional quadrics.

\begin{lemma}%
\label{lemma:even-dimension-intersection-quadric-auto-trivial-trivial}
If $m$ is even, then $\mathrm{Aut}_{\textup{tr}}(X) = \{\mathrm{Id}\}$.
\end{lemma}

\begin{proof}
Let \(m=2e\). Let \(\Sigma\) be the space of \(e\){-}planes contained in \(X\).
Let \(\eta\) be the class in
\(\mathrm{H}^{m}_{\text{\'et}}(X,{\mathbb{Q}}_{\ell})\) defined by the
cohomology class of an intersection of \(X\) with a collection of generic
hyperplanes. Let \(F(X)\) be the free abelian group generated by \(\Sigma\) and
\(\eta\), and let \(A(X)\) be the quotient of \(F(X)\) modulo the relations
\begin{equation*}
\eta - s - s_i - s_j - s_{ij},
\end{equation*}
for \(s \in \Sigma\). Here, \(s_i = \sigma_i(s)\), and \(s_{ij} =\sigma_i \sigma_j(s)\).

Let $g\in \mathrm{Aut}_{\text{tr}}(X)$. By~\cite[Page~48]{R}, we have a
commutative diagram
\[
\begin{tikzcd}
  A(X)\ar[hook,d]\ar[r,"\widehat{g}"] & A(X)\ar[hook,d] \\
  \mathrm{H}^{m}_{\text{\'et}}(X,{\mathbb{Q}}_{\ell})
  \ar[r, "g^\ast=\mathrm{Id}"] &
  \mathrm{H}^{m}_{\text{\'et}}(X,{\mathbb{Q}}_{\ell})
\end{tikzcd}
\]
It follows from the remark \cite[Chapter 3, Page 49]{R} that the
map \[A(X)\rightarrow \mathrm{H}^m(X)\] is injective. In particular, we have
$\widehat{g}=\mathrm{Id}$. By~\cite[Proposition 3.18]{R}, we have an injection
\[
\mathrm{Aut}(X)\hookrightarrow \mathrm{Aut}(\Sigma)
\hookrightarrow \mathrm{Aut}(A(X)).
\]
Therefore, we have $g=\mathrm{Id}$.
\end{proof}

We now deal with odd dimensional quadrics. We first need a lemma.

\begin{lemma}\label{lemmint}
Suppose that $X$ is a smooth complete intersection of type $(2,2)$. If
$\dim(X)=m=2e+1$ is odd, then a general point $p\in X$ is the intersection of
two $e$-planes in $X$. In particular, \(p\) is the intersection of all
\(e\){-}planes containing \(p\).
\end{lemma}

\begin{proof}
Suppose $T_{X,p}$ is the projective tangent space to $X$ at $p$. By a remark in
\cite[Page 65]{R}, the intersection $T_{X,p}\cap X$ is a cone of a nonsingular
intersection $Y$ of two quadrics in $T_{X,p}$ if $p$ is a general point of $X$.
It is clear that $\dim(Y)=m-3$ is even. By \cite[Theorem 3.8]{R}, there are two
$(e-1)$-planes $s$ and $t$ in $Y$ such that $s\cap t$ is empty. Note that the
cones $\mathrm{Cone}(s)$ and $\mathrm{Cone}(t)$ with vertex $p$ are two
$e$-planes in $X$ containing $p$. It is clear that the intersection
$\mathrm{Cone}(s)\cap \mathrm{Cone}(t)$ is the point $p$.
\end{proof}

\begin{lemma}
Let notation be as in~\ref{sec:2-quadrics}. Assume that \(m\) is odd. Then
$\mathrm{Aut}_{\textup{tr}}(X)$ is $(\mathbb{Z}/2\mathbb{Z}){}^{m+1}$. Suppose
an automorphism $g$ of $X$ is given by
$\sigma_{i_1}\circ\ldots\circ\sigma_{i_s}$. Then
$g\in\mathrm{Aut}_{\textup{tr}}(X)$ if and only if $s$ is even.
\end{lemma}

\begin{proof}
Suppose $m=2e+1$. Let $S$ be the variety parameterizing $e$-planes in $X$. By
\cite[Chapter 4, Page 55]{R}, the variety $S$ is nonsingular of dimension $e+1$.
The incidence subvariety $T=\{(s,x)|x\in s\}\subset S\times X$
\[
\begin{tikzcd}
&T\ar[dl,swap,"p_1"] \ar[dr,"p_2"]&\\
S& & X
\end{tikzcd}
\]
induces an isomorphism
$\mathrm{H}_{\text{\'et}}^1(S)\cong\mathrm{H}_{\text{\'et}}^{2e+1}(X)$,
see~\cite[Theorem 4.14]{R}. Moreover loc.~cit.~also says that an isomorphism $g
\in \mathrm{Aut}_{L}(X)$ induces $\widehat{g}:S\rightarrow S$ and a commutative
diagram
\[
\begin{tikzcd}
\mathrm{H}_{\text{\'et}}^1(S,{\mathbb{Q}}_{\ell})\ar[d,"{\widehat{g}^*}"] \ar[r,"{\cong}"] & \mathrm{H}_{\text{\'et}}^{2e+1}(X,{\mathbb{Q}}_{\ell})\ar[d,"{g^*}"]\\
\mathrm{H_{\text{\'et}}}^1(S,{\mathbb{Q}}_{\ell})\ar[r,"{\cong}"] &
\mathrm{H}_{\text{\'et}}^{2e+1}(X,{\mathbb{Q}}_{\ell}).
\end{tikzcd}
\]

We shall need two claims to complete our argument.

\medskip\noindent%
\textbf{Claim~1.} \textit{If $\widehat{g}$ has a fixed point and
  $\widehat{g}^*=\mathrm{Id}$, then $g=\mathrm{Id}$ on $X$}.

\medskip%
In fact,
by~\cite[Theorem 4.8]{R}, we know $S$ is an abelian variety. From the injection
\[
\mathrm{End}(S)\hookrightarrow
\mathrm{End}(\text{H}^1_{\text{\'et}}(S,{\mathbb{Q}}_{\ell})),
\]
it follows that $\widehat{g}=\mathrm{Id}$. On the other hand, by
Lemma~\ref{lemmint}, a general point $p\in X$ is the intersection of $e$-planes
in $X$ which contain $p$. In other words, we have $p=\bigcap P_i$ where $P_i$
runs through all $e$-planes in $X$ containing $p$. It follows that
\[
g(p)=g\left(\bigcap_{p\in P_i} P_i\right)=\bigcap_{p\in P_i}\widehat{g}(
P_i)=\bigcap_{p\in P_i} P_i=p
\]
for a general point $p\in X$. We conclude that $g=\mathrm{Id}$.

\medskip\noindent%
\textbf{Claim~2.}
\(\sigma_i^*=-\mathrm{Id}\) on
\(
\mathrm{H}_{\text{\'et}}^{m}(X,{\mathbb{Q}}_{\ell})\cong
\mathrm{H}_{\text{\'et}}^1(S,{\mathbb{Q}}_{\ell})
\).

\medskip%
In fact, since the automorphism $\sigma_i$ fixes $\{x_i=0\}\cap X$, which is a
smooth complete intersection of type $(2,2)$ of dimension $2e$, it is clear that
$\sigma_i$ fixes all the $e$-planes in $\{X_i=0\}\cap X$. Let $s$ be a $e$-plane
in $\{X_i=0\}\cap X$. It follows that $\widehat{\sigma_i}([s])=[s]$. Let $C$ be
the hyperelliptic curve (cf. \cite[Page 56]{R}) given by:
\[
z^2=\prod_{i=0}^{n} (x-\lambda_i y).
\]
For the point $[s]\in S$, by~\cite[Proposition 4.2]{R}, we have a map
\[
\Psi:
C\rightarrow S
\]
whose image is the closure of $C^{1'}_s=\{t\in S|\dim(s\cap t)=e-1\}$.
Therefore, the automorphism $\widehat{\sigma_i}:S\rightarrow S$ maps $C^{1'}_s$
to $C^{1'}_s$. It induces an automorphism \[g_i:C\rightarrow C\] such that
$\widehat{\sigma_i}\circ \Psi=\Psi\circ g_i $, i.e., the following diagram commutes.
\[
\begin{tikzcd}
C\ar[r,"{\Psi}"]\ar[d,"{g_i}"] &S\ar[d,"{\widehat{\sigma_i}}"]\\
C\ar[r,"{\Psi}"] & S
\end{tikzcd}
\]
By~\cite[Proposition 4.2, Corollary 4.5, Lemma 4.7 and Theorem 4.8]{R}, we
conclude that the morphism $\Psi$ induces an
isomorphism
\[
\Psi^*:\text{H}_{\text{\'et}}^1(C,\mathbb{Q}_\ell)\rightarrow\text{H}_{\text{\'et}}^1(S,\mathbb{Q}_\ell).
\]
By the main theorem of~\cite{Poonen}, there exist $\lambda_i$, \(i=0,\ldots,n\)
such that
\[
\mathrm{Aut}(C)=\{1,\iota \}
\]
where $\iota$ is the natural involution. In particular, it follows from the
previous claim that $(\widehat{\sigma_i})^*\ne \mathrm{Id}$. We conclude
$g_i\neq \mathrm{Id}$. Therefore, we have $g_i=\iota$. It follows that
$\widehat{\sigma_i}^*=-\mathrm{Id}$ on
$\mathrm{H}_{\text{\'et}}^1(S,{\mathbb{Q}}_{\ell})=\mathrm{H}_{\text{\'et}}^{m}(X,{\mathbb{Q}}_{\ell})$.
So we have shown the claim holds for one particular intersection
$X=X_{\{\lambda_i\}}$ of two quadrics which is determined by a certain
$\{\lambda_i\}_{i=0}^n$.

The general case is handled as follows. We consider the smooth family of
complete intersections of two quadrics
\[
\xymatrix{X_{\{\lambda_i\}}\ar[d] \ar@{}[r]|-{\subseteq}  & \widehat{X}=\{\sum X_i^2=0,\sum \lambda_i X_i^2=0\}\ar[d]_f\\
  \{\lambda_i\} \ar@{}[r]|-{\in}& B }
\]
The automorphism $\sigma_i$ acts on $\widehat{X}$ fiberwise. Thus
\(\sigma_i^{\ast}+\mathrm{Id}\in\mathrm{End}(\mathrm{R}^{m}f_{\ast}{\mathbb{Q}}_{\ell})\),
hence it is a global section of the lisse-\'etale sheaf
\(\mathit{End}(\mathrm{R}^{m}f_{\ast}{\mathbb{Q}}_{\ell})\). Its vanishing at
\(X_{\{\lambda_i\}}\) shows that it vanishes everywhere. This proves the claim.

\medskip%
Combining the two claims, we conclude that for an automorphism
$g=\sigma_{i_1}\circ\ldots\circ\sigma_{i_s}$ of $X$, $g\in
\mathrm{Aut}_{\mathrm{tr}}(X)$ if and only if $s$ is even.
\end{proof}

\appendix

\section{The infinitesimal Torelli theorem over an arbitrary field}\label{app}

We give an exposition on infinitesimal Torelli theorem of
complete intersections, a precise statement is given in
Theorem~\ref{theorem:pairing-injectiv} and Theorem~\ref{theorem:flenner-ci}. We
present these notes because we need to apply a positive characteristic version
of Theorem~\ref{theorem:flenner-ci} to deduce the faithfulness of automorphism
actions on cohomology groups of certain complete intersections.

Flenner shows the infinitesimal Torelli theorem holds for certain complex
manifolds ~\cite{Inf}. His method can also be used to prove infinitesimal
Torelli theorems for algebraic varieties over an arbitrary field as long as one
takes a bit care with the duality between symmetric powers. Although the proof
we exposed in Section~\ref{sec:thes-spectr-sequ} looks slightly different from
Flenner's original proof (we find our spectral sequence argument makes the
book-keeping less painful), the idea of using the resolutions $T$, $A$, $B$ (see
Section~\ref{sec:three-resolutions}), certainly goes back to Flenner.
Of course, we should not claim any originality here.

\refstepcounter{thm}
\subsection*{\thetheorem.~Divided powers}
Recall that the $r$th \emph{divided power} of a locally free sheaf $E$ on a
scheme, notation $D_r(E)$, is defined to be the dual
\begin{equation}
\label{eq:divided-power}
D_r(E) = \mathrm{Sym}^r(E^*){}^*.
\end{equation}
If $e_1,\ldots,e_n$ is a local frame of $E$, then we write
\begin{equation*}
e_1^{(i_1)}\cdots e_n^{(i_n)}
\end{equation*}
to be the element dual to the basis elements
\begin{equation*}
(e_1^*){}^{i_1}\cdots (e_n^*){}^{i_n}.
\end{equation*}
If $u = \sum u_i e_i$, one defines
\begin{equation}
\label{eq:divided-power-of-a-vector}
u^{(r)} = \sum_{p_1+\cdots p_n= r} u_1^{p_1}\cdots u_n^{p_n} e_1^{(p_1)}\cdots e_n^{(p_n)}.
\end{equation}
This definition does not depend on the choice of basis. There is a natural
pairing between divided power and symmetric power:
\begin{equation}
\label{eq:sym-div}
\mathrm{Sym}^{p+s}(E) \otimes D_s(E^*) \to \mathrm{Sym}^p(E),
\end{equation}
dual to the algebra structure on $D_*(E^*)$.

The bialgebra structure on the symmetric algebra $\mathrm{Sym}^*(E)$ defines a
bialgebra structure on $D_*(E)$.

\begin{situation}
\label{situation:flenner}
Let $X$ be a smooth, proper scheme pure dimension $n$ over a field $k$. Assume
that there exists a short exact sequence of locally free sheaves
\begin{equation*}
0 \to \mathcal{G} \to \mathcal{F} \to \Omega_{X}^1 \to 0.
\end{equation*}
Let $p$ be a positive integer no larger than $n$. We make the following two
hypotheses:
\begin{enumerate}
\item[(i)] the map
\begin{equation*}
\mathrm{H}^0(X,D_{n-p}(\mathcal{G}^*)\otimes \omega_X)\otimes
\mathrm{H}^0(X,D_{p-1}(\mathcal{G}^*)\otimes \omega_X) \to
\mathrm{H}^0(X,D_{n-1}(\mathcal{G}^*)\otimes \omega_X^2)
\end{equation*}
is surjective;
\item[(ii)] for all $0 \leq j \leq n-2$, we have
\begin{equation*}
\textstyle\mathrm{H}^{j+1}
(X, \mathrm{Sym}^j(\mathcal{G})\otimes \bigwedge^{n-1-j}(\mathcal{F})\otimes \omega_X^{-1})
 = 0.
\end{equation*}
\end{enumerate}
\end{situation}

We have a long exact sequence of locally free sheaves
\begin{equation}
\label{eq:resolving-exterior-omega}
0 \to \mathrm{Sym}^{p-1}(\mathcal{G}) \to \cdots \to \mathcal{G} \otimes
\textstyle\bigwedge^{p-2}(\mathcal{F}) \to \bigwedge^{p-1}(\mathcal{F}) \to \Omega_X^{p-1}
\to 0.
\end{equation}
Dualizing~(\ref{eq:resolving-exterior-omega}), replacing $p$ by $p+1$, we get
\begin{equation}
\label{eq:dualize-resolving-exterior-omega}
(\Omega_X^p){}^* \to \textstyle\bigwedge^p(\mathcal{F}^*) \to \mathcal{G}^*\otimes
\bigwedge^{p-1}(\mathcal{F}^*) \to \cdots \to D_p(\mathcal{G}^*)\to 0.
\end{equation}
Since the pairing
\begin{equation*}
\Omega^p_X\otimes \Omega_X^{n-p} \to \omega_X
\end{equation*}
is a perfect pairing, tensoring $\omega_X$ to
(\ref{eq:dualize-resolving-exterior-omega}) and change $p$ by $n-p$ yields
\begin{equation}
\label{eq:coresolving-omega-p}
0 \to\Omega_X^p \to \textstyle\bigwedge^{n-p}(\mathcal{F}^*) \otimes \omega_X \to \cdots
\to D_{n-p}(\mathcal{G}^*)\otimes\omega_X \to 0.
\end{equation}
When $p=1$, dualize (\ref{eq:coresolving-omega-p}) yields
\begin{equation}
\label{eq:resolving-tangent}
0 \to \omega_X^*\otimes \mathrm{Sym}^{n-1}(\mathcal{G})\to\cdots\to
\omega_X^*\otimes \textstyle\bigwedge^{n-1}\mathcal{F}  \to T_X \to 0.
\end{equation}

\refstepcounter{thm}
\subsection*{\thetheorem.~Three resolutions}
\label{sec:three-resolutions}
Following Flenner, we introduce three complexes $T$, $A$ and $B$ of locally free
sheaves that resolve the sheaves $T_X$, $\Omega^p_X$ and $\Omega_X^{p-1}$
respectively. Let notation be as in Situation~\ref{situation:flenner}. Define
\begin{equation}
T_\bullet = \{ \omega_X^*\otimes \mathrm{Sym}^{n-1}(\mathcal{G})\to\cdots\to
\omega_X^*\otimes \textstyle\bigwedge^{n-1}\mathcal{F}\}
\end{equation}
placed at cohomological degree $-(n-1), \ldots, -1, 0$;
\begin{equation}
A^\bullet = \{\textstyle\bigwedge^{n-p}(\mathcal{F}^*) \otimes \omega_X \to \cdots
\to D_{n-p}(\mathcal{G}^*)\otimes\omega_X \}
\end{equation}
placed at cohomological degree $0, 1, \ldots,n-p$; and
\begin{equation}
B_\bullet = \{\mathrm{Sym}^{p-1}(\mathcal{G}) \to \cdots \to \mathcal{G} \otimes
\textstyle\bigwedge^{p-2}(\mathcal{F}) \to \bigwedge^{p-1}(\mathcal{F}) \}
\end{equation}
placed at cohomological degree $-(p-1),\ldots, -1,0$.
Then we have quasi-isomorphisms
\begin{equation*}
T \to T_X[0],\quad \Omega_X^p[0] \to A,\quad B \to \Omega_X^{p-q}[0].
\end{equation*}
The contraction
\begin{equation*}
T_X \otimes \Omega_X^p \to \Omega^{p-1}_X
\end{equation*}
thus induces a morphism
\begin{equation}
\label{eq:11}
\langle \cdot , \cdot \rangle : T \otimes A \to B,
\end{equation}
in the derived category $\mathrm{D^b_{Coh}}(X)$. The morphism
$\langle{.,.}\rangle$ has an explicit description: By mulitilinear algebra, there
are natural contractions
\begin{equation*}
\textstyle\bigwedge^{s+t}
(\mathcal{E}) \otimes \bigwedge^s(\mathcal{E}^*) \to \bigwedge^t(\mathcal{E}),
\quad
\mathrm{Sym}^{s+t}(\mathcal{E}) \otimes D_s(\mathcal{E}^*) \to \mathrm{Sym}^t(\mathcal{E}).
\end{equation*}
Note that the hypothesis~\ref{situation:flenner}%
(ii) says precisely that
\begin{equation*}
\mathrm{H}^{j+1}(X,T_j) = 0, \quad 0 \leq j \leq n-2.
\end{equation*}

\refstepcounter{thm}
\subsection*{\thetheorem.~Spectral sequences}
\label{sec:thes-spectr-sequ}
We shall prove the infinitesimal Torelli theorem by an induction argument with
the help of certain spectral sequences. There are spectral sequences
\begin{equation*}
{}^{T}E^{-i,j}_1 = \mathrm{H}^j(X,T_{i}),
{}^{A}E^{i,j}_1 = \mathrm{H}^j(X,A^i),
{}^{B}E^{-i,j}_1 = \mathrm{H}^j(X,B_i)
\end{equation*}
abutting to $\mathrm{H}^{*}(X,T_X)$, $\mathrm{H}^{*}(X,\Omega_X^{p})$ and
$\mathrm{H}^{*}(X,\Omega_X^{p-1})$ respectively. The map~(\ref{eq:11}) thus
induces a morphism
\begin{equation*}
\langle{\cdot,\cdot}\rangle_m : {}^T E_m^{-i,j} \otimes {}^A E_m^{s,t} \to {}^B E_m^{s-i,j+t}.
\end{equation*}
on the level of spectral sequences (i.e., compatible with differentials). This
follows from the construction of the spectral sequences (since the naïve
filtration, which is used to define these spectral sequences is preserved under
the tensor product).

Ultimately we are interested in the map
\begin{equation}
\label{eq:2}
\langle{\cdot,\cdot}\rangle :
\mathrm{H}^1(X,T_X) \otimes \mathrm{H}^{n-p}(X,\Omega_X^p) \to \mathrm{H}^{n-p+1}(X,\Omega_X^{p-1}).
\end{equation}
The product structure on the spectral sequences eventually computes the tensor
product of assoicated graded vector spaces for the infinitesimal Torelli map.

We wish to prove that~\eqref{eq:2} is injective on the first factor, in the
sense that if an element $\xi \in \mathrm{H}^1(X,T_X)$ induces the zero linear
function, i.e., $\langle{\xi,\cdot}\rangle = 0$, then $\xi$ must be zero. The
relevant terms in ${}^T E_m^{-i,j}$ are those with $j - i = 1$.

Hypothesis~\ref{situation:flenner}%
(ii) says that ${}^T E_{1}^{-i,j} = 0$ for all $j - i = 1$ and $i \neq -(n-1)$.
Since $(-(n-1),n)$ is the left highest edge of the spectral sequence, we infer
that $\mathrm{H}^1(X, T_X) = {}^T E_\infty^{-(n-1),n}$ and is the intersection
\[
\bigcap_{m} \mathrm{Ker}(d_m^{-(n-1),n}) \subset {}^T E_1^{-(n-1),n}.
\]
We shall denote by $V_{n-1-m}$ the subspace $\mathrm{Ker}(d_m^{-(n-1),n})$ of
$V_{n-1} = {}^T E_{1}^{-(n-1),n}$. Then $V_m \subset V_{m-1}$ and
$\mathrm{H}^1(X,T_X)$ is $V_{0}$. It turns out Flenner's condition (i) of
Hypothesis~\ref{situation:flenner} puts some non-deneneracy on the first stage of
the spectral sequences.

\begin{lemma}
\label{lemma:step1}
Let notation be as above. The pairing
\begin{equation*}
\langle{\cdot,\cdot}\rangle : V_0 \otimes {}^A E_1^{n-p,0} \to
{}^B E_1^{-(p-1),n}
\end{equation*}
is injective on the first factor.
\end{lemma}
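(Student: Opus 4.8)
The plan is to unwind the spectral sequence terms appearing in the statement into concrete cohomology groups built from $\mathcal G$ and $\omega_X$, and then reduce the injectivity claim to the surjectivity hypothesis \ref{situation:flenner}(i) together with a Serre duality argument. First I would identify the three edge terms. By definition of the naive filtration spectral sequence, ${}^A E_1^{n-p,0} = \mathrm H^0(X, A^{n-p}) = \mathrm H^0(X, D_{n-p}(\mathcal G^*)\otimes\omega_X)$, since $A^{n-p}$ is the last term of the complex \eqref{eq:coresolving-omega-p}. Likewise ${}^B E_1^{-(p-1),n} = \mathrm H^n(X, B_{p-1}) = \mathrm H^n(X, \mathrm{Sym}^{p-1}(\mathcal G))$, the first term of \eqref{eq:resolving-tangent} read as a resolution of $\Omega_X^{p-1}$ after the appropriate twist; and $V_{n-1} = {}^T E_1^{-(n-1),n} = \mathrm H^n(X, T_{n-1}) = \mathrm H^n(X, \omega_X^*\otimes \mathrm{Sym}^{n-1}(\mathcal G))$, the left-highest corner term of the $T$-spectral sequence. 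On these corner terms the pairing $\langle\cdot,\cdot\rangle_1$ is, by the explicit description recalled before the Lemma, simply induced by the contraction $\mathrm{Sym}^{n-1}(\mathcal G)\otimes D_{n-p}(\mathcal G^*)\to\mathrm{Sym}^{p-1}(\mathcal G)$ tensored with the natural pairing on the $\omega_X$-factors $\omega_X^*\otimes\omega_X\to\mathcal O_X$.

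Next I would bring in Serre duality. Since $X$ is smooth proper of pure dimension $n$, we have $\mathrm H^n(X, \omega_X^*\otimes\mathrm{Sym}^{n-1}(\mathcal G))^\vee \cong \mathrm H^0(X, \omega_X^2\otimes \mathrm{Sym}^{n-1}(\mathcal G)^*) = \mathrm H^0(X, D_{n-1}(\mathcal G^*)\otimes\omega_X^2)$, using $\mathrm{Sym}^{n-1}(\mathcal G)^* = D_{n-1}(\mathcal G^*)$ by the definition \eqref{eq:divided-power}. Dually, ${}^B E_1^{-(p-1),n} = \mathrm H^n(X,\mathrm{Sym}^{p-1}(\mathcal G))$ has dual $\mathrm H^0(X, D_{p-1}(\mathcal G^*)\otimes\omega_X)$. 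Under these identifications, the transpose of the pairing $\langle\cdot,\cdot\rangle_1\colon V_{n-1}\otimes \mathrm H^0(X,D_{n-p}(\mathcal G^*)\otimes\omega_X)\to \mathrm H^n(X,\mathrm{Sym}^{p-1}(\mathcal G))$, viewed as a map from ${}^B E_1^{-(p-1),n}$'s dual to $V_{n-1}$'s dual with the middle factor fixed, becomes precisely the multiplication map
\[
\mathrm H^0(X,D_{n-p}(\mathcal G^*)\otimes\omega_X)\otimes \mathrm H^0(X,D_{p-1}(\mathcal G^*)\otimes\omega_X)\to \mathrm H^0(X,D_{n-1}(\mathcal G^*)\otimes\omega_X^2)
\]
coming from the algebra structure on $D_*(\mathcal G^*)$ — this is exactly the map that hypothesis \ref{situation:flenner}(i) asserts is surjective. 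Surjectivity of this multiplication map says, dually, that the pairing $V_{n-1}\otimes \mathrm H^0(X,D_{n-p}(\mathcal G^*)\otimes\omega_X)\to \mathrm H^n(X,\mathrm{Sym}^{p-1}(\mathcal G))$ is nondegenerate in the first variable on all of ${}^T E_1^{-(n-1),n}=V_{n-1}$; in particular it is nondegenerate on the subspace $V_0\subset V_{n-1}$. That gives the claimed injectivity of $\langle\cdot,\cdot\rangle_1$ on the first factor $V_0$.

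The main obstacle I anticipate is the careful bookkeeping needed to verify that the transpose of the contraction-induced pairing really is the $D_*$-algebra multiplication map named in \ref{situation:flenner}(i), rather than some twist or dual of it: this requires matching up the Serre duality isomorphisms with the multilinear-algebra contractions $\mathrm{Sym}^{s+t}(\mathcal E)\otimes D_s(\mathcal E^*)\to\mathrm{Sym}^t(\mathcal E)$ and checking compatibility of the $\omega_X$-twists, including the appearance of $\omega_X^2$ on the target. A secondary subtlety is that we only want injectivity on $V_0 = \mathrm H^1(X,T_X) = \bigcap_m V_{n-1-m}$, not on all of $V_{n-1}$; but since surjectivity of a map of finite-dimensional vector spaces gives nondegeneracy of the transpose pairing on the \emph{entire} source $V_{n-1}$, restricting to the subspace $V_0$ is automatic and costs nothing. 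Once the identification with hypothesis (i) is pinned down, the argument is a short duality formality.
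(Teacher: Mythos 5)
Your proposal is correct and follows exactly the paper's own argument: identify the three $E_1$ corner terms as $\mathrm{H}^n(X,\omega_X^*\otimes\mathrm{Sym}^{n-1}(\mathcal G))$, $\mathrm{H}^0(X,D_{n-p}(\mathcal G^*)\otimes\omega_X)$ and $\mathrm{H}^n(X,\mathrm{Sym}^{p-1}(\mathcal G))$, then observe via Serre duality that injectivity on the first factor is the dual of the surjectivity asserted in hypothesis (i). (The only blemish is a harmless citation slip: $\mathrm{Sym}^{p-1}(\mathcal G)$ is the first term of the complex coming from~\eqref{eq:resolving-exterior-omega}, not of~\eqref{eq:resolving-tangent}.)
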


\begin{proof}
By the definition of the spectral sequence, the pairing is
\begin{equation*}
\mathrm{H}^n(X,\omega_X^*\otimes \mathrm{Sym}^{n-1}(\mathcal{G}))
\otimes
\mathrm{H}^{0}(X,D_{n-p}(\mathcal{G}^*)\otimes \omega_X) \to
\mathrm{H}^n(X,\mathrm{Sym}^{p-1}(\mathcal{G})).
\end{equation*}
But saying the injectivity on the first factor amounts to saying, thanks to the
Serre duality, that the pairing in Situation~\ref{situation:flenner} (i) is
surjective. We win.
\end{proof}

\begin{lemma}
\label{lemma:step2}
For all $m>0$, we have
\begin{equation*}
V_{n-m} \otimes {}^A E_m^{n-p,0} \to {}^B E_m^{-(p-1),n}
\end{equation*}
is injective on the first factor.
\end{lemma}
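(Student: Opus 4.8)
The statement is the inductive step that upgrades Lemma~\ref{lemma:step1} (the $m=1$ case) to all pages of the spectral sequence, and the proof should proceed by induction on $m$. So I would assume, as inductive hypothesis, that
\[
V_{n-m}\otimes {}^A E_m^{n-p,0}\to {}^B E_m^{-(p-1),n}
\]
is injective on the first factor, and deduce the same for $m+1$. The key observation is that the differentials $d_m$ on the three spectral sequences are \emph{compatible} with the pairing $\langle{\cdot,\cdot}\rangle_m$: this was recorded just above, coming from the fact that the naïve filtration is preserved by tensor product. Thus $\langle{\cdot,\cdot}\rangle_m$ restricts to a pairing on the subquotients computing page $m+1$.

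First I would unwind what ``passing to page $m+1$'' does to each of the three relevant entries. On the $T$-side, $V_{n-1-m}=\mathrm{Ker}(d_m^{-(n-1),n})$ by definition, and since $(-(n-1),n)$ is the extreme corner there is no incoming differential, so ${}^T E_{m+1}^{-(n-1),n}=V_{n-1-m}$ is literally a \emph{subspace} of $V_{n-m}={}^T E_m^{-(n-1),n}$. On the $A$-side, ${}^A E_{m+1}^{n-p,0}$ is a subquotient of ${}^A E_m^{n-p,0}$ (kernel of an outgoing $d_m$ modulo image of an incoming one). On the $B$-side similarly ${}^B E_{m+1}^{-(p-1),n}$ is a subquotient of ${}^B E_m^{-(p-1),n}$. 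So I would take an element $\xi\in V_{n-1-m}\subset V_{n-m}$ lying in the kernel of the page-$(m+1)$ pairing against every element of ${}^A E_{m+1}^{n-p,0}$, and argue that it already lies in the kernel of the page-$m$ pairing against every element of ${}^A E_m^{n-p,0}$; then the inductive hypothesis forces $\xi=0$.

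The main obstacle is exactly this last reduction: a priori $\xi$ only kills the lifts of classes surviving to page $m+1$, and one must show it also kills classes that die on earlier pages. Here is where I expect to lean on the structure: a class in ${}^A E_m^{n-p,0}$ that does not survive is hit by a differential $d_j$ ($j\le m$) from some ${}^A E_j^{n-p-j,\,j-1}$, or is itself killed by an outgoing differential. Using compatibility of $\langle{\cdot,\cdot}\rangle_j$ with $d_j$, pairing $\xi$ (which sits in $V_{n-1-m}\subseteq V_{n-1-j}=\mathrm{Ker}(d_j^{-(n-1),n})$ for all $j\le m$, so $\xi$ is a permanent cycle through page $m$) against such a class produces, up to a differential on the $B$-side, the pairing of $\xi$ with something in a lower ${}^A E_j$ term; the extreme-corner position of $\xi$ and the vanishing of the other ${}^T E_1^{-i,j}$ with $j-i=1$ (Hypothesis~\ref{situation:flenner}(ii)) should make all these correction terms vanish or land in a zero group. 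I would also need that the target ${}^B E_m^{-(p-1),n}$ in the relevant bidegree has no incoming $d_{\le m}$ — i.e. that it too sits appropriately against the edge — so that ``injective on the first factor at page $m$'' really does pull back to page $m+1$ without loss. Modulo a careful but essentially formal bookkeeping with these edge-vanishings, the induction closes, and combined with Lemma~\ref{lemma:step1} as the base case it yields the injectivity of the limiting pairing~\eqref{eq:2} on $\mathrm{H}^1(X,T_X)=V_0$, which is the infinitesimal Torelli statement.
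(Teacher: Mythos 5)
Your proposal is correct and follows essentially the same route as the paper: induction on the page number, using that the corner positions force ${}^A E_{m}^{n-p,0}\to{}^A E_{m+1}^{n-p,0}$ to be surjective and ${}^B E_{m+1}^{-(p-1),n}\to{}^B E_{m}^{-(p-1),n}$ to be injective, while Leibniz compatibility together with $\xi\in\mathrm{Ker}(d_m)$ and the vanishing of ${}^B E_m^{s,t}$ for $s<-(p-1)$ or $t>n$ shows that $\xi$ annihilates $\mathrm{Im}(d_m)$, so the kernel condition descends to the previous page. The only cosmetic point is that your ``correction terms'' die for these bidegree reasons on the $B$-side (and because $\xi$ is a $d_m$-cycle), not because of Hypothesis~\ref{situation:flenner}(ii), which enters only later to identify $\mathrm{H}^1(X,T_X)$ with the corner term $V_0$.
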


\begin{proof}
We have the following diagram
\begin{equation*}
\begin{tikzcd}[column sep = -0.5em]
V_{n-m+1} & \otimes & {}^A E_{m-1}^{n-p,0} \ar[two heads]{d}  \ar{rr} &\qquad  & {}^B E_{m-1}^{-(p-1),n} \\
V_{n-m} \ar[hook]{u}& \otimes & {}^A E_{m}^{n-p,0} \ar{rr} &\qquad & {}^B E_{m}^{-(p-1),n} \ar[hook]{u}
\end{tikzcd}
\end{equation*}
The right vertical arrow in injective since the coordinate $(-(p-1),n)$ of the
spot forces so (it's the left-most highest spot for ${}^B E$). By the definition
of $V_{n-m}$, an element $v$ of it satisfies the property
\begin{equation*}
\langle{v,d_{m-1}a} \rangle = 0.
\end{equation*}
Since ${}^A E_m^{n-p,0}$ is defined to be the quotient
${}^A E_{m-1}^{n-p,0}/\mathrm{Im}(d_{m-1})$, we see the second horizontal
pairing is indeed injective on the first fact, by induction on $m$.
\end{proof}

\begin{theorem}
\label{theorem:pairing-injectiv}
Let notation be as in Situation~\ref{situation:flenner}. The pairing
\begin{equation*}
\mathrm{H}^1(X,T_X) \otimes \mathrm{H}^{n-p}(X,\Omega_X^p)
\to
\mathrm{H}^{n-p+1}(X,\Omega^{p-1}_X)
\end{equation*}
is injective on the first factor.
\end{theorem}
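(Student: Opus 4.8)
The plan is to deduce the theorem from \lemref{lemma:step1} and \lemref{lemma:step2} by passing to the limit in $m$, and then to reinterpret the resulting statement about $E_\infty$-terms as the asserted injectivity of the cup product~\eqref{eq:2} on cohomology.

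First I would record the relevant stabilizations. Hypothesis~\ref{situation:flenner}~(ii) annihilates every contribution to $\mathrm{H}^1(X,T_X)$ coming from a spot other than $(-(n-1),n)$, so $\mathrm{H}^1(X,T_X)={}^T E_\infty^{-(n-1),n}=\bigcap_m V_{n-1-m}$, and in particular every $\xi\in\mathrm{H}^1(X,T_X)$ lies in $V_{n-m}$ for all $m$. On the other side, the spot $(n-p,0)$ receives no differential after page $n-p+1$, while the spot $(-(p-1),n)$ — being the leftmost highest spot for ${}^B E$ — receives no differential at all, so ${}^A E_m^{n-p,0}$ and ${}^B E_m^{-(p-1),n}$ stabilize for $m\gg 0$ (e.g.\ $m=n$). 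Taking such an $m$ and invoking \lemref{lemma:step2}, with \lemref{lemma:step1} as the base case of its induction, I obtain that
\[
\langle\cdot,\cdot\rangle_\infty:\mathrm{H}^1(X,T_X)\otimes {}^A E_\infty^{n-p,0}\longrightarrow {}^B E_\infty^{-(p-1),n}
\]
is injective on the first factor.

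Next I would identify $\langle\cdot,\cdot\rangle_\infty$ with the symbol of~\eqref{eq:2}. With respect to the naive (``stupid'') filtrations used to build the three spectral sequences, ${}^A E_\infty^{n-p,0}$ is the deepest filtration step $F^{n-p}\mathrm{H}^{n-p}(X,\Omega^p_X)$, hence a subspace of $\mathrm{H}^{n-p}(X,\Omega^p_X)$, whereas ${}^B E_\infty^{-(p-1),n}$ is the top graded quotient $\mathrm{H}^{n-p+1}(X,\Omega^{p-1}_X)/F^{-(p-2)}$. Because the stupid filtration is multiplicative and $\langle\cdot,\cdot\rangle$ in~\eqref{eq:11} is the totalization of the termwise contractions, for every $\xi\in\mathrm{H}^1(X,T_X)$ the composite
\[
{}^A E_\infty^{n-p,0}\hookrightarrow \mathrm{H}^{n-p}(X,\Omega^p_X)\xrightarrow{\ \langle\xi,\cdot\rangle\ }\mathrm{H}^{n-p+1}(X,\Omega^{p-1}_X)\twoheadrightarrow {}^B E_\infty^{-(p-1),n}
\]
agrees with $b\mapsto\langle\xi,b\rangle_\infty$. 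Granting this, the theorem follows at once: if $\xi$ induces the zero linear functional on $\mathrm{H}^{n-p}(X,\Omega^p_X)$, then $\langle\xi,\cdot\rangle$ vanishes in particular on the subspace ${}^A E_\infty^{n-p,0}$, hence $\langle\xi,\cdot\rangle_\infty=0$, hence $\xi=0$ by the injectivity just established.

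The only serious point — all the rest being bookkeeping already done in \lemref{lemma:step1} and \lemref{lemma:step2} — is this last compatibility: one must check that the products on the three spectral sequences genuinely compute the symbol of the contraction $T_X\otimes\Omega^p_X\to\Omega^{p-1}_X$ at the edge spots $(-(n-1),n)$, $(n-p,0)$, $(-(p-1),n)$, keeping straight which of those graded pieces is a subspace and which is a quotient of its abutment. This is a formal consequence of the multiplicativity of the stupid filtration together with the explicit description of $\langle\cdot,\cdot\rangle$ recalled after~\eqref{eq:11}, though it does deserve a careful verification.
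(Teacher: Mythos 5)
Your argument is correct and is essentially the paper's own proof: both deduce the statement from Lemma~\ref{lemma:step2} (with Lemma~\ref{lemma:step1} as its base case) by observing that ${}^A E_\infty^{n-p,0}$ sits inside $\mathrm{H}^{n-p}(X,\Omega_X^p)$ as the deepest filtration step, so any $\xi$ annihilating the whole group annihilates that edge term and must vanish. The compatibility you flag at the end — that the product of spectral sequences computes the associated graded of the cup product — is exactly the point the paper also invokes (via multiplicativity of the naive filtration) without further elaboration.
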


\begin{proof}
Let $\xi$ be an element in $E_\infty^{-(n-1),n} = V_0 = \mathrm{H}^1(X,T_X)$
that pairs to zero with the second factor. Then it kills all elements in
${}^AE_{\infty}^{n-p,0}$ (the right lowest edge of the spectral sequence) since this
is a subspace of $\mathrm{H}^{n-p}(X,\Omega^p_X)$. By Lemma~\ref{lemma:step2},
$\xi$ is zero. This completes the proof.
\end{proof}

\refstepcounter{thm}
\subsection*{\thetheorem.~Applications to complete intersections}
Flenner's theorem has several applications. Most notably is the validity of the
infinitesimal Torelli for complete intersections in projective spaces. Using
Theorem~\ref{theorem:pairing-injectiv}, Flenner, in his original paper, proves
the infinitesimal Torelli theorem for complete intersections by verifying the
his conditions (i) and (ii) for them. The proof is about linear algebra of
polynomials, and only uses the Bott vanishing theorem for the projective space
as an extra input, hence does not depend on the ambient field. The only possible issue
is the duality between symmetric power and divided power, but this is readily
resolved by noticing the following fact: If $L_1,\ldots,L_r$ are line bundles on
a variety $X$, then there is an isomorphism of \emph{bialgebras}
\[
D_*(L_1\oplus \cdots \oplus L_r) \cong \bigoplus L_1^{i_1} \otimes \cdots \otimes L_r^{i_r}.
\]
This is true, since the dual algebra of $D_*$ is the symmetric algebra, and
the symmetric algebra version of this isomorphism is well-known.

\begin{proposition}%
\label{theorem:flenner-ci}
Let $k$ be any algebraically closed field. Let $X$ be a complete intersection of
type $(d_1,\ldots,d_c)$ (\(d_i \geq 2\)) in a projective space $\mathbb{P}^{n+c}$. Then the
conditions \textup{(i), (ii)} of Hypothesis~\ref{situation:flenner} are verified
for the exact sequence
\begin{equation*}
0 \to \bigoplus_{i=1}^c \mathcal{O}_X(-d_i) \to \Omega_{\mathbb{P}^{n+c}}^1|_X \to \Omega_X^1 \to 0.
\end{equation*}
except the cases listed below. Hence the infinitesimal Torelli theorem holds for
all complete intersections except for the ones listed below.
\end{proposition}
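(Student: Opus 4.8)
The plan is to verify hypotheses~(i) and~(ii) of Situation~\ref{situation:flenner} for the sequence $0 \to \mathcal{G} \to \mathcal{F} \to \Omega^1_X \to 0$ with $\mathcal{G} = \bigoplus_{i=1}^c \CO_X(-d_i)$ and $\mathcal{F} = \Omega^1_{\PP^{n+c}}|_X$, reducing every cohomology group in sight to cohomology of line bundles on $X$ and of twisted sheaves $\Omega^q_{\PP^{n+c}}$ on the ambient space. The only external inputs will be Bott's vanishing theorem on $\PP^{n+c}$ and the Koszul resolution
\[
0 \to \CO(-d_1-\cdots-d_c) \to \cdots \to \bigoplus_{i=1}^c \CO(-d_i) \to \CO \to \CO_X \to 0 .
\]
By adjunction $\omega_X \cong \CO_X(e)$ with $e = d_1+\cdots+d_c-n-c-1$, and, since $\mathcal{G}^* = \bigoplus_i\CO_X(d_i)$ is a sum of line bundles, the bialgebra isomorphism recalled before the statement gives $D_r(\mathcal{G}^*) \cong \bigoplus_{|\alpha|=r}\CO_X(\alpha\cdot\mathbf d)$, $\Sym^j(\mathcal{G}) \cong \bigoplus_{|\beta|=j}\CO_X(-\beta\cdot\mathbf d)$ and $\bigwedge^q\mathcal{F}\cong\Omega^q_{\PP^{n+c}}|_X$, where $\alpha,\beta$ run over $c$-tuples of nonnegative integers of the indicated total, $\mathbf d = (d_1,\dots,d_c)$ and $\alpha\cdot\mathbf d = \sum_i\alpha_id_i$. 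Note also that by Theorem~\ref{theorem:pairing-injectiv} the infinitesimal Torelli theorem follows once (i) and (ii) hold for a \emph{single} integer $p$ with $1\le p\le n$; choosing $p$ optimally in terms of $(d_1,\dots,d_c)$ is part of the argument, and this freedom is what keeps the exceptional set small.

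For (i): after these identifications the map in (i) is, blockwise over $c$-tuples $\gamma$ with $|\gamma|=n-1$, the map
\[
\bigoplus_{\alpha+\beta=\gamma,\ |\alpha|=n-p}
H^0\!\big(\CO_X(\alpha\cdot\mathbf d+e)\big)\otimes H^0\!\big(\CO_X(\beta\cdot\mathbf d+e)\big)
\ \longrightarrow\ H^0\!\big(\CO_X(\gamma\cdot\mathbf d+2e)\big).
\]
Since $X$ is a complete intersection its homogeneous coordinate ring is Cohen--Macaulay, equals $\bigoplus_{m\ge0}H^0(\CO_X(m))$, and is generated in degree $1$; hence $H^0(\CO_X(s))\otimes H^0(\CO_X(t))\to H^0(\CO_X(s+t))$ is surjective for all $s,t\ge0$, so a single summand $(\alpha,\beta)$ with $\alpha\cdot\mathbf d+e\ge0$ and $\beta\cdot\mathbf d+e\ge0$ already surjects onto the whole $\gamma$-block of the target. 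Thus (i) holds provided that for every $\gamma$ with $|\gamma|=n-1$ and $\gamma\cdot\mathbf d+2e\ge0$ there is a splitting $\gamma=\alpha+\beta$, $|\alpha|=n-p$, with both parts of nonnegative degree. This is automatic when $e\ge0$; when $e<0$ the decisive $\gamma$ is the one supported on the smallest $d_i$, and the existence of a good $p$ reduces to a short list of numerical inequalities on $(n;d_1,\dots,d_c)$, which fails exactly for the cubic surface, the even-dimensional complete intersections of two quadrics, and possibly a few further small Fano cases.

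For (ii): we must show $H^{j+1}(X,\Sym^j(\mathcal{G})\otimes\bigwedge^{n-1-j}(\mathcal{F})\otimes\omega_X^{-1})=0$ for $0\le j\le n-2$, that is $H^{j+1}(X,\Omega^{n-1-j}_{\PP^{n+c}}|_X(-\beta\cdot\mathbf d-e))=0$ for every $\beta$ with $|\beta|=j$. Tensoring the Koszul resolution of $\CO_X$ with $\Omega^{n-1-j}_{\PP^{n+c}}(\ell)$ where $\ell=-\beta\cdot\mathbf d-e$, and running the hypercohomology spectral sequence, this group is assembled from the groups $H^q\big(\PP^{n+c},\Omega^{n-1-j}_{\PP^{n+c}}(\ell-\sum_{i\in S}d_i)\big)$ with $S\subseteq\{1,\dots,c\}$ and $j+1\le q\le j+1+c$. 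Bott's theorem says $H^q(\PP^N,\Omega^t_{\PP^N}(s))=0$ unless $q=0$ and $s>t$, or $q=t$ and $s=0$, or $q=N$ and $s<t-N$; feeding in the data above, every contributing term lies outside this range except for finitely many arithmetic coincidences, and cataloguing them (again exploiting the freedom in $p$) reproduces the exceptional list of the statement.

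The main obstacle I expect is the combinatorial bookkeeping in the last two steps: one has to keep simultaneous track of the Koszul spectral-sequence degrees, the narrow windows where Bott vanishing fails, and which single value of $p$ is available for a given multidegree, sharply enough to pin down the exceptional set exactly rather than with slack. A secondary technical point --- the one that matters for the positive-characteristic applications of this appendix --- is that all of the above is carried out with divided powers $D_r$ in place of symmetric powers $\Sym^r$; but the bialgebra isomorphism $D_*(\bigoplus_iL_i)\cong\bigoplus L_1^{i_1}\otimes\cdots\otimes L_r^{i_r}$ recalled before the statement turns every such computation into one with line bundles, so no hypothesis on $\chari(k)$ is needed.
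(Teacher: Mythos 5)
Your overall strategy coincides with the one the paper actually uses: the text does not re-derive conditions (i) and (ii) but defers to Flenner's verification, which is precisely your reduction --- decompose $D_r(\mathcal{G}^*)$, $\mathrm{Sym}^j(\mathcal{G})$ and $\bigwedge^q\mathcal{F}$ into line-bundle blocks and restrictions of $\Omega^q_{\PP^{n+c}}$, use projective normality of complete intersections (surjectivity of $H^0(\CO_X(s))\otimes H^0(\CO_X(t))\to H^0(\CO_X(s+t))$ for $s,t\ge 0$) to handle (i), and run the Koszul hypercohomology spectral sequence against Bott vanishing to handle (ii). Those reductions are correct, and like the paper you leave the actual cataloguing of the exceptional list as combinatorial bookkeeping.

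There is, however, a genuine gap at exactly the point this appendix is supposed to address, namely independence of $\chari(k)$. The map in condition (i) is induced by the multiplication of the divided-power algebra $D_*(\mathcal{G}^*)$ (it is the Serre dual of the contraction pairing used in Lemma~\ref{lemma:step1}). On the summand indexed by $(\alpha,\beta)$ with $\alpha+\beta=\gamma$ it equals $\prod_i\binom{\gamma_i}{\alpha_i}$ times the multiplication of sections, and this multinomial coefficient can vanish in positive characteristic. Hence your assertion that ``a single summand $(\alpha,\beta)$ with both twists nonnegative already surjects onto the whole $\gamma$-block'' is false as stated: one needs in addition that the coefficient is a unit, or that several splittings jointly fill the block. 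This is not cosmetic. For a cubic fivefold ($c=1$, $d=3$, $n=5$, $e=-4$) the degree constraints admit only $p=3$, the unique splitting carries the coefficient $\binom{4}{2}=6$, and the target $H^0(\CO_X(4))$ is nonzero, so in characteristic $2$ or $3$ the map in (i) is identically zero for every admissible $p$ even though cubic fivefolds are not on the exceptional list. Your closing claim that the bialgebra isomorphism $D_*(L_1\oplus\cdots\oplus L_r)\cong\bigoplus L_1^{i_1}\otimes\cdots\otimes L_r^{i_r}$ removes all dependence on $\chari(k)$ is precisely where this is hidden: that isomorphism identifies the graded pieces as sheaves, but the ring structure on $D_*$ still carries the binomial coefficients, and that is where the characteristic enters. (The paper itself is terse here, citing Flenner and asserting field-independence, so your proposal shares this weakness with the text; a complete argument must either track these coefficients or reformulate (i) so that they do not occur.)
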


There are a few cases that (i) and (ii) do not verify, these exceptions are:
\begin{enumerate}
\item (2,2) complete intersections,
\item cubic surfaces,
\item even dimensional (2,3) complete intersections,
\item even dimensional (2,2,2) complete intersections,
\item cubic fourfolds,
\item quartic K3 surfaces, and
\item quadrics.
\end{enumerate}
As is well-known, the infinitesimal Torelli fails for cubics and even
dimensional (2,2) complete intersections. For quadrics the infinitesimal
Torellis theorem is trivial.

We need the infinitesimal Torelli for a complete intersection over a field of
characteristic \(p>0\) in Section~\ref{sec:aut-and-coh} to conclude the proof of
Theorem~\ref{autocoh}(ii). For cubic fourfolds, the second-named
author has verified the validity of Theorem~\ref{autocoh}(ii) in another paper~\cite{pan:automorphism-cohomology-i}.
Thus, to conclude the proof we need, in addition to the
cases covered by Proposition~\ref{theorem:flenner-ci}, we still need verify the
infinitesimal Torelli holds true for (2,2,2)-type complete
intersections and (2,3)-type complete intersections of even dimension.
Flenner, in his original paper,
already proves these cases by other means (modifying his scheme of proofs). But
we decide to provide an exposition to these facts for the sake of completeness.
Both cases are proven similarly. Flenner treated the case (2,2,2); so we shall
illustrate our method by dealing with (2,3) complete intersections in
$\mathbb{P}^{2p+2}$.

Recall that the Bott vanishing theorem asserts that the cohomology groups
\begin{equation*}
\mathrm{H}^{b}(\mathbb{P}^r,\Omega_{\mathbb{P}^r}^a(\ell))
\end{equation*}
vanish except in the following cases:
\begin{itemize}
\item $b = a$, $\ell = 0$,
\item $b = 0$, $\ell > a$, or
\item $b = r$, $\ell < a - r$.
\end{itemize}
For a smooth (2,3) complete intersection $X$ in $\mathbb{P} = \mathbb{P}^{2p+2}$,
there might be one extra term in ${}^T E_1$ that is nonzero, namely
${}^T E^{-(p-1),p}_1$. This term is
\begin{equation*}
\mathrm{H}^{p}(X, \Omega_{\mathbb{P}}^p|_X \otimes \mathrm{Sym}^{p-1}(\mathcal{O}_X(-2)
\otimes \mathcal{O}_X(-3)) \otimes \mathcal{O}_X(2p-2)).
\end{equation*}
By the binomial theorem, this cohomology group can be decomposed into a direct sum
\begin{equation*}
\bigoplus_{i=0}^{p-1} \mathrm{H}^p(X, \Omega^p_{\mathbb{P}}(-i)|_X){}^{\oplus\binom{p-1}{i}}
\end{equation*}
By the Bott vanishing theorem and the spectral sequence associated to the Koszul
complex, one sees that the only nontrivial summand in the decomposition is
\begin{equation*}
k \cong \mathrm{H}^p(X,\Omega_{\mathbb{P}}^p|_X).
\end{equation*}
If this 1-dimensional space is killed by some differential of ${}^T E$, then it
won't contribute anything to the eventual infinitesimal Torelli map, and we
already win. If the contrary holds, and suppose $\xi \neq 0$ kills all the space
$\mathrm{H}^p(X,\Omega_X^p)$. Let's derive a contradiction. Then each element
$\eta \in \mathrm{H}^p(X,\Omega_X^p) = \mathrm{H}^0(X,\Omega_X^p[p])$ induces a
homomorphism of spectral sequences
\begin{equation*}
\langle{\cdot, \eta}\rangle : {}^T E^{a,b}_m \to {}^B E_m^{a,b+p}.
\end{equation*}
At the $E_1$-stage, by identifying ${}^B E_1^{-(p-1),2p}$ with
$\mathrm{H}^{2p}(X,\omega_X)$ and ${}^T E_1^{-(p-1),p}$ with
$\mathrm{H}^{p}(X,\Omega_{\mathbb{P}}^p|_X)$, this map is identified with the
cup-product map
\begin{equation}
\label{eq:3}
\cup \eta: \mathrm{H}^p(X,\Omega_{\mathbb{P}}^p|_X) \to \mathrm{H}^{2p}(X,\omega_X).
\end{equation}
Since ${}^T E_1^{-(p-1),p} = {}^T E_\infty^{-(p-1),p}$, the image of the above
map actually falls in ${}^B E_\infty^{-(p-1),2p} \subset
\mathrm{H}^{2p}(X,\omega_X)$. The map~\eqref{eq:3} is not always zero as the
pairing
\begin{equation*}
\mathrm{H}^p(X,\Omega_{\mathbb{P}}^p|_X)\otimes \mathrm{H}^p(X,\Omega^{p}_X) \to \mathrm{H}^{2p}(X,\omega_X).
\end{equation*}
is not identically zero (which also yields $\mathrm{H}^{2p}(X,\omega_X)$ survives
in ${}^B E_\infty$).

\end{document}